\documentclass[final,1p,times]{elsarticleHF}

\usepackage[english]{babel}
\usepackage[utf8]{inputenc}
\usepackage[T1]{fontenc}
\usepackage{lmodern}
\usepackage[locale=UK]{siunitx}
\usepackage{pgfplots}
\pgfplotsset{cycle list name=black white,
	axis x line=bottom,
	axis y line=left,
	xlabel near ticks,
	ylabel near ticks,
	clip mode=individual,
	/pgf/number format/use comma,
	width= .8\textwidth,
	height=7cm,
	every axis plot/.append style={line join=round},
	}
\pgfplotsset{compat=1.9}
\usepackage{pgfplotstable}
\usepackage{xfrac}
\usepackage{graphicx}
\usepackage{xcolor}
\usepackage{import}
\usepackage{todonotes}
\usepackage{bbm}

\usepackage{showkeys} 
\usepackage{amsmath, amssymb, amsthm} 
\usepackage{mathtools} 
\usepackage{algorithm}
\usepackage{algpseudocode}
\algnewcommand\algorithmicinput{\textbf{Input:}}
\algnewcommand\Input{\item[\algorithmicinput]}
\algnewcommand\algorithmicoutput{\textbf{Output:}}
\algnewcommand\Output{\item[\algorithmicoutput]}
\numberwithin{equation}{section}
\usepackage{hyperref}	
\usepackage[english]{cleveref} 
\usepackage{autonum}
\usepackage{dsfont} 

\newtheorem{thm}{Theorem}
\newtheorem{rem}[thm]{Remark}

\theoremstyle{definition}



\DeclareMathOperator{\diag}{diag}
\DeclareMathOperator{\Real}{Re}
\DeclareMathOperator{\Imag}{Im}
\DeclareMathOperator{\vect}{vec}

\DeclareMathOperator{\sgn}{sgn}

\begin{document}
\begin{frontmatter}

\title{Lyapunov and Sylvester equations:\\A quadrature framework}

\author[label1]{Christian Bertram\corref{cor1}}
\author[label1]{Heike Fa\ss bender}
\address[label1]{Institut \emph{Computational Mathematics}/ AG Numerik, TU Braunschweig, Universitätsplatz 2, 38106 Braunschweig, Germany}
\cortext[cor1]{Corresponding author, Email ch.bertram@tu-braunschweig.de}

\begin{abstract}
	This paper introduces a novel framework for the solution of (large-scale) Lyapunov and Sylvester equations derived from numerical integration methods. Suitable systems of ordinary differential equations are introduced. Low-rank approximations of their solutions are produced by Runge-Kutta methods. Appropriate Runge-Kutta methods are identified  following the idea of geometric numerical integration to preserve a geometric property, namely a low rank residual. For both types of equations we prove the equivalence of one particular instance of the resulting algorithm to the well known ADI iteration. As the general approach suggested here leads to complex valued computation even for real problems, we present a general realification approach based on similarity transformation.
\end{abstract}

\begin{keyword}
Lyapunov equation \sep Sylvester equation \sep alternating direction method \sep Runge-Kutta method


\MSC 65F30 \sep 15A24 \sep 93A15 \sep 65L06 \sep 34A26
\end{keyword}

\end{frontmatter}

\section{Introduction}
The numerical approximation of the solution of the continuous Lyapunov equation
\begin{align}
	\label{eq:lyap}
	A\mathcal{P} + \mathcal{P}A^\mathsf{T} + BB^\mathsf{T} &= 0
\end{align}
has been considered to great extend in the literature, see, e.g. the recent survey \cite{simoncini2016} and the references therein. Here
a new framework based on methods for the numerical integration of ordinary differential equations (ODEs) 
is presented. We will consider \eqref{eq:lyap} for  $A\in \mathbb{R}^{n\times n}$ and $B\in \mathbb{R}^{n\times m}$, $m \leq n.$
The equation \eqref{eq:lyap} has a unique symmetric positive definite solution $\mathcal{P}$
if all eigenvalues of $A$ are in the open left half-plane $\mathbb{C}_{-}$ (that is, if $A$ is stable). In that case, the analytic solution can 
be written as
\begin{align}\label{eq:P}
	\mathcal{P} &= \int_0^\infty e^{At}BB^{\mathsf{T}}e^{A^{\mathsf{T}}t}\,\mathrm dt \in \mathbb{R}^{n \times n},
\end{align}
see, e.g., \cite{LanR95}. 
Lyapunov equations play an important role in control and systems theory, see, e.g. \cite{Ant2005,Dat94,GajQ95}. 
In the context of linear time-invariant systems $\dot{x} = Ax + Bu,$ the solution $\mathcal{P}$ of \eqref{eq:lyap} is called the controllability Gramian. It measures the energy transfer in the system.

We will also consider Sylvester equations
\begin{align}
	\label{eq:sylv}
	A\mathcal{Y} - \mathcal{Y}B - FG^{\mathsf{T}} &= 0
\end{align}
with given $A\in \mathbb{R}^{n \times n},$ $B\in \mathbb{R}^{m\times m},$ $F\in \mathbb{R}^{n\times r}$ and $G\in \mathbb{R}^{m\times r}$. The solution $\mathcal{Y}$ is unique when the spectra of $A$ and $B$ are disjunct, i.e. $\sigma(A)\cap\sigma(B)=\emptyset,$ see, e.g. \cite{LanR95}.

In particular, we will be concerned with \eqref{eq:lyap} and \eqref{eq:sylv} for
large and sparse system matrices and a low rank initial residual, that is for the Lyapunov equation $BB^{\mathsf{T}}$ and for the Sylvester equation $FG^{\mathsf{T}}$ is of low rank. In case of \eqref{eq:lyap} $m \ll n$ for large $n$ will automatically
yield a low rank residual. In that case, the symmetric positive definite solution $\mathcal{P}$ of \eqref{eq:lyap} can be approximated by a low rank approximation in the sense that $\mathcal{P}\approx Z Z^{\mathsf{T}}$ with a rectangular 
$n\times N$ matrix $Z, N \ll n,$ \cite{li2002,benner2013}. $Z$ is often called a low rank Cholesky factor, even so $Z$ is not a
square lower triangular matrix.
In a similar fashion, if $FG^{\mathsf{T}}$ is of low rank, the solution of the Sylvester equation can be approximated by $\mathcal{Y}\approx \hat Z\Gamma\breve Z^{\mathsf{T}}$ with  rectangular $n\times N$ matrices $\hat Z,\, \breve Z, N\ll n$  and a diagonal matrix $\Gamma,$ see, e.g. \cite{benner2009,grasedyck2004}.

In the following we give an overview of methods important or related to our later discussion. For a more exhaustive survey of methods for the solution of various linear matrix equations we refer to \cite{simoncini2016}. 

A popular algorithm for deriving low rank Cholesky factors for Lyapunov and Sylvester equations is the alternate directions implicit (ADI) method. It was developed to solve linear systems of equations in \cite{peaceman55} and modified to approximate the solution $\mathcal{P}$ of Lyapunov equations in \cite{lu91} (see \cite[Sec. 7]{benbreifen2015} for a short or \cite[Chp. 3.1-3.2.1]{Kue16} for a more detailed derivation). 
In \cite{li2002} the iteration was reformulated such that low rank approximations $Z_jZ_j^{\mathsf{T}}$ to the solution $\mathcal{P}$ are generated. This yields the computationally more efficient ADI-variant called Cholesky factor ADI (CF-ADI) algorithm.
Clearly, an approximate solution $P_j$  will not satisfy \eqref{eq:lyap} exactly, a nonzero residual $\mathcal{L}(P_j) =AP_j+P_jA^{\mathsf{T}} + BB^{\mathsf{T}}$ will remain. This residual can be used to determine convergence of the iterative process. In  \cite[Alg. 3.2]{Kue16}, the ADI iteration was further manipulated in order to allow for a fast evaluation of the residual norm $\|\mathcal{L}(P_j)\|.$ This is known as the residual-based ADI method.
An alternative derivation of this formulation utilizing Krylov subspaces can be found in \cite{wolf13}. The ADI iteration was also adapted to Sylvester equations, see \cite{benner2009}, \cite[Chp. 3.3]{Kue16}.

Another type of methods for the solution of Lyapunov equations is making use of empirical Gramians  \cite{Moo81}. The empirical Gramian essentially involves  a sum approximation of the integral \eqref{eq:P}
$\mathcal{P} = \sum_j  \delta_j g(t_j)$ for $g(t) = e^{At}BB^{\mathsf{T}}e^{A^{\mathsf{T}}t},$ arbitrary times $t_j$
and appropriate quadrature weights $\delta_j.$ Usually, the identity $g(t)= h(t)h(t)^{\mathsf{T}}$ for $h(t)=e^{At}B$
is used to determine $g(t_j).$ In doing so, $h(t)$ is not computed directly, but as the solution of the ordinary differential equation (ODE) $\frac{\mathrm d}{\mathrm dt}h(t)=Ah(t)$ with initial value $h(0)=B.$ Any numerical integration scheme can be used to do so.
Related quadrature approaches are discussed, e.g., in \cite{saad90, singler11}. Empirical Gramians are mainly used in 
model order reduction via balanced proper orthogonal decomposition (POD), see \cite{Rowley05}.

The ADI iteration and the quadrature-based methods have been connected to rational Krylov subspaces and moment matching, see \cite{druskin11, Opm12, wolf13}.
In \cite{Opm12} quadrature methods with complex time stepsizes {for the approximation of empirical Gramians} were analyzed. Further the stability function of certain multi-stage implicit methods was connected to the (complex) interpolation points used in rational interpolation.


In this paper for the Lyapunov case we utilize the time-dependent Gramian 
\[
P(t) = \int_0^te^{A\tau} BB^\mathsf{T}e^{A^\mathsf{T}\tau} d\tau.
\] 
For $t \rightarrow \infty,$ $P(t)$ will approximate the solution $\mathcal{P}$ of the Lyapunov equation.
We will make use of the fact that $P(t)$ can be interpreted as the solution of a certain system of ODEs. It turns out that also in the context of Sylvester equations we can state a useful system of ODEs.  
 Runge-Kutta methods are employed to derive algorithms for the low rank approximation of the solution of Lyapunov and Sylvester equations. 
In the Lyapunov case, neither $P(t)$ nor the iterates $P_j$ from the Runge-Kutta methods will exactly satisfy the Lyapunov equation. We will observe that $\mathcal{L}(P(t)) = AP(t)+P(t)A^\mathsf{T}+BB^\mathsf{T}=h(t)h(t)^\mathsf{T}$ for $h(t) = e^{At}B.$ Our key idea is to use only those Runge-Kutta methods which lead to iterates $P_j$ with conformable low rank Lyapunov residuals.
Thus, we use ideas from geometric numerical integration, where qualitative properties (e.g. algebraic invariants) of the solution are preserved instead of fulfilling quantitative properties (e.g. small errors), cf. \cite[Chp. 5]{iserles_2008}, \cite{HaiLW06}. Herewith we derive a residual based iteration which turns out to be equivalent to the ADI iteration.
By making use of the stability function of a Runge-Kutta method it will be shown further that these methods are equivalent to DIRK methods.

The paper is organized as follows. In the next section we introduce notation, review basic properties of Runge-Kutta methods for numerical integration and give a short introduction to the ADI iteration for the solution of Lyapunov equations.
\Cref{sec:gramian_as_ode} deals with the Lyapunov equation and its numerical solution. In Section \ref{sec31} we present a first algorithm for computing a low rank approximation to the time-dependent Gramian $P(t)$ by means of a Runge-Kutta method.
Clearly, $P(t)$ will not exactly satisfy the Lyapunov equation. In Section \ref{sec:invariant},
we derive an expression for the Lyapunov residual $\mathcal{L}(P(T)).$ This turns out to be of low rank for $m \leq n.$ We propose to use only those Runge-Kutta methods which yield iterates  satisfying the same kind of Lyapunov residual as $P(t).$
Conditions for appropriate Runge-Kutta methods leading to such iterates are given. 
In Section \ref{sec33} the usual approach of using the same Runge-Kutta method in each iteration step is relaxed in order to
allow for the use of different Runge-Kutta methods in each iteration step.
Section \ref{sec34} deals with the equivalence of a certain instance of the resulting algorithm to the CF-ADI iteration. 
Next, in Section \ref{subsec_multupdate} the appropriate Runge-Kutta methods are further characterized  by means of their stability functions. 
As discussed in Section \ref{sec36}, it turns out, that the appropriate methods are essentially determined by $s$ parameters.
Finally, in Section \ref{sec:geometric}  the choice of (complex-valued) shifts is discussed, while in Section \ref{sec37} the realification of the
potentially complex arithmetic involving algorithm is considered.
The ideas for the solution of Lyapunov equations are transferred to the Sylvester equation in Section \ref{sec:sylvester}.
The paper ends with some concluding remarks in Section \ref{sec:conclusion}.
\section{Preliminaries}
\label{sec:preliminaries}
In this section we will introduce some notation used in the following as well as briefly recall Runge-Kutta methods for the numerical integration of ordinary differential equations. Moreover, the ADI method for solving Lyapunov equations \eqref{eq:lyap} is reviewed.

The set of complex numbers with positive (negative) real part will be denoted by $\mathbb{C}_{+}$ ($\mathbb{C}_{-}$). The positive (negative) real numbers will be denoted by $\mathbb{R}_{+}$ ($\mathbb{R}_{-}$). 

We will frequently make use of the Kronecker product of two matrices as well as the vectorization of a matrix, see, e.g.,
\cite{HorJ91,GolVL13} for a more complete discussion.
If $X$ is an $r \times s$ matrix and $Y$ is a $p \times q$ matrix, 
then the Kronecker product $X \otimes Y$ is the $rp \times sq$ block matrix
\[
    X \otimes Y  =\begin{bmatrix}
x_{11}Y &\cdots &x_{1s}Y \\\vdots &\ddots &\vdots \\x_{r1}Y &\cdots &x_{rs}Y \end{bmatrix}.
\]
If $X$ and $Y$ are regular, then the property
\begin{align}
	(X\otimes Y)^{-1} = X^{-1} \otimes Y^{-1}
\end{align}
holds.
Other useful Kronecker product properties are
\begin{align}\label{eq:kronmult}
(X \otimes Y)(V\otimes W) &= XV \otimes YW\\
P(X\otimes Y) Q^\mathsf{T} &= Y \otimes X\label{eq:kronswap}
\end{align}
for suitable $V, W$ and the perfect shuffle permutation matrices $P$ and $Q,$ see, e.g. \cite[Chp. 1.3.6]{GolVL13}. 
In particular, for square matrices $X$ and $Y$ (that is, $r=s$ and $p = q$), we have $P = Q = I_{rp}([(1{:}p{:}rp)\ (2{:}p{:}rp)\ \ldots\ (p{:}p{:}rp)],{:})$ where $I_{rp}$ denotes the $rp \times rp$ identity matrix and MATLAB\textsuperscript{\textregistered} colon notation is used to specify the arrangement of the rows of $I_{rp}.$ 
In case $X$ or $Y$ is a vector, the corresponding perfect shuffle permutation matrix is the identity: let $s=1$, then $P(X\otimes Y)I_{q} = Y \otimes X$. 

The vectorization of a matrix converts the matrix into a column vector. For a $r \times s$ matrix $X$, $\vect(X)$
denotes the $rs \times 1$ column vector obtained by stacking the columns of the matrix $X$ on top of one another:
\[
   \vect (X)=[x_{11},\ldots ,x_{r1},x_{12},\ldots ,x_{r2},\ldots ,x_{1s},\ldots ,x_{rs}]^\mathsf{T}.
\]
The vectorization and the Kronecker product can be used to express matrix multiplication as a linear transformation 
on matrices. In particular,
\[
  \vect(XYZ)=(Z^\mathsf{T} \otimes X) \vect(Y) 
\]
for matrices $X$, $Y$, and $Z$ of dimensions $r\times s$, $s\times t$, and $t\times v$. In particular,
 we can rewrite the Sylvester equation \eqref{eq:sylv}
in the form
\[
	(I_{n}\otimes A-B^\mathsf{T}\otimes I_{n})\vect(\mathcal{Y})=\vect(FG^{\mathsf{T}}),
\]
where  $I_{n}$ is the $n \times n$ identity matrix. In this form, the equation can be seen as a linear system of 
equations of dimension  $n^2\times n^2.$ From this it is fairly straightforward to see that a unique solution 
$\mathcal{Y}$ exists for all $FG^{\mathsf{T}}$ if and only if $A$ and $B$ have no common eigenvalues, see, e.g., \cite{HorJ91,LanR95}.

\subsection{Numerical integration}\label{sec:RK}
There are numerous methods for the numerical solution of ordinary differential equations of the type
\begin{align}
	\label{eq:ode}
	\frac{\mathrm d}{\mathrm dt}y(t)=f(t,y(t)), \quad y(0)=y_0,
\end{align}
see, e.g., \cite{HaiNW93,HaiW96}. Here $f\colon \mathbb{R}\times \mathbb{R}^n \rightarrow \mathbb{R}^{n}$ is a given function, $y_0 \in\mathbb{R}^{n}$ is a given initial value and one is interested in computing the function $y\colon \mathbb{R} \rightarrow \mathbb{R}^n$ in the interval $[0, t_\text{end}].$
Single-step methods make use of the fact that
\begin{align}
	y(t_j)&=y(t_{j-1})+\int_{t_{j-1}}^{t_j} f(t,y(t))\,\mathrm dt
\end{align}
holds for $j = 1, 2, \ldots, N$ and $ t_0 = 0 < t_1 < t_2 < \cdots < t_N = t_\text{end}$ in order to compute 
approximate solutions $y_j \approx y(t_j).$

We will consider $s$-stage Runge-Kutta methods (see, e.g., \cite{But16,HaiLW06,HaiNW93,HaiW96}) which are defined via
	\begin{align}
		\label{eq:runge-kutta}
		y_j &= y_{j-1} + \omega_j\sum_{i=1}^{s}\beta_ik_i^{(j)}, & j &= 1, \ldots, N,\\
		\label{eq:runge-kutta-k}
		k_i^{(j)} &= f\Big(t_{j-1}+\gamma_i\omega_{j},\, y_{j-1}+\omega_j\sum_{\ell=1}^{s}\lambda_{i\ell}k_\ell^{(j)}\Big),\ & i&=1,\dots,s,
	\end{align}
for certain $\beta_i \in \mathbb{C},$ $\gamma_i  \in \mathbb{R}$, $i = 1, \ldots, s$  and $\lambda_{i\ell} \in \mathbb{C}$, $i,\ell = 1, \ldots, s.$ 
Please note that we allow for complex-valued $\lambda_{ij}$ and $\beta_i$ unlike the standard definition of Runge-Kutta methods for the solution of \eqref{eq:ode}.
Moreover,  $\omega_j\coloneqq t_j-t_{j-1} > 0$, $j = 1, \ldots, N,$ denotes the time step size. Often Runge-Kutta methods are
given in short hand by the so called Butcher tableau
\begin{align}
	\label{eq:butcher}
\begin{array}{c|c}
\gamma& \Lambda\\
\hline
& \beta^{\mathsf{T}} \\
\end{array}=
\begin{array}{c|cccc}
\gamma_1    & \lambda_{11} & \lambda_{12}& \dots & \lambda_{1s}\\
\gamma_2    & \lambda_{21} & \lambda_{22}& \dots & \lambda_{2s}\\
\vdots & \vdots & \vdots& \ddots& \vdots\\
\gamma_s    & \lambda_{s1} & \lambda_{s2}& \dots & \lambda_{ss} \\
\hline
       & \beta_1    & \beta_2   & \dots & \beta_s\\
\end{array}
\end{align}
with $\Lambda\in\mathbb{C}^{s\times s}$, $\beta \in \mathbb{C}^s$ and $\gamma\in \mathbb{R}^{s}.$ 

If in the Butcher tableau $\Lambda$ is a strict lower triangular matrix, then the $k_i^{(j)}$ can be calculated 
explicitly one after another. Otherwise they are only defined implicitly and a system of (in general nonlinear) 
equations with $sn$ unknowns has to be solved to obtain them. For explicit Runge-Kutta methods the region of 
absolute stability is small and bounded. On the other hand, implicit Runge-Kutta methods have much larger regions of absolute stability
and the time step size can be chosen based on the desired accuracy, not due to stability constraints. In order
to avoid the high computational costs for general implicit Runge-Kutta methods, often so-called
diagonal implicit Runge-Kutta (DIRK) methods are used, where $\Lambda$ is a lower triangular matrix \cite{KenC16}. This uncouples the system of equations to be solved into a sequence of $s$ systems.

The function 
\begin{equation}\label{eq:stab}
	R(z) = 1 + z\beta^\mathsf{T}(I_s-z\Lambda)^{-1}\mathds{1}_s = \frac{\det(I_s-z(\Lambda-\mathds{1}_s\beta^\mathsf{T}))}{\det(I_s-z\Lambda)}
\end{equation}
is called the stability function of the Runge-Kutta method given by \eqref{eq:butcher}. Here, $\mathds{1}_s$ denotes the vector $\mathds{1}_s=(1,\dots, 1)^{\mathsf{T}}$ consisting of $s$ ones.
When a Runge-Kutta method is applied to the linear differential equation $y'=\lambda y$ the iteration is given by $y_{k}=R(z)y_{k-1}$ with $z=\omega\lambda$.
The method is said to be A-stable if all $z$ with Re$(z) < 0$ are in the domain of absolute stability, that is
the set of all $z=\omega\lambda$ with $|R(z)|<1.$

\subsection{The ADI method}\label{sec:adi}
Here we introduce the ADI iteration for the Lyapunov equation based on \cite{Kue16,benkuebt2013}. The goal is to approximate the $n \times n$ solution $\mathcal{P}$ of the Lyapunov equation \eqref{eq:lyap} in factored form $Z Z^{\mathsf{T}}\approx \mathcal{P}$, where $Z\in \mathbb{R}^{n\times mN}$ with $mN\ll n$. 

The ADI iteration for computing the solution of \eqref{eq:lyap} is given by
\begin{align}
	X_0 &= 0,\\
	(A+\alpha_j I_n)X_{j-\frac{1}{2}} &= -BB^{\mathsf{T}} - X_{j-1}(A^{\mathsf{T}}-\alpha_j I_n),\\
	(A+\alpha_j I_n)X_{j}^{\mathsf{H}} &= -BB^{\mathsf{T}} - X_{j-\frac{1}{2}}^{\mathsf{H}}(A^{\mathsf{T}}-\alpha_j I_n),
\end{align}
with complex shift parameters $\alpha_1, \dots, \alpha_N\in \mathbb{C}_{-}$. For a certain choice of shift parameters the iterates $X_j$ will converge to $\mathcal{P}.$
Reformulating this iteration into a single step, writing the iterates $X_j=Z_jZ_j^{\mathsf{H}}$ in factored form and applying some algebraic manipulations (see \cite[Chp. 3.2]{Kue16} for the details) one obtains the iteration
\begin{align}
	V_1 &= (A+\alpha_1 I_n)^{-1}B,\ Z_1 = \sqrt{-2\Real(\alpha_1)}V_1,\\
	V_j &= V_{j-1} - (\alpha_j+\overline{\alpha_{j-1}})(A+\alpha_jI_n)^{-1}V_{j-1},\\
	Z_j &= \left[ Z_{j-1}, \sqrt{-2\Real(\alpha_j)}V_j \right].
\end{align}
With the findings from \cite[Chp. 3.2.4]{Kue16} respectively, the iteration results in \Cref{alg:adi32}.
Please note that $Z_j$ grows in each iteration step by a block of $m$ columns.

\begin{algorithm}[ht]
    \caption{Low rank ADI iteration \cite[Alg. 3.2, $E=I_n$]{Kue16}}
    \label{alg:adi32}
    \begin{algorithmic}[1] 
	    \Input $A \in \mathbb{R}^{n \times n}$ stable, $B \in \mathbb{R}^{n\times m}$, parameters $\left\{ \alpha_1,\dots,\alpha_{N} \right\}\in \mathbb{C}_{-}$
	    \Output  $Z\in \mathbb{C}^{n\times mN}$ with $Z Z^{\mathsf{H}}\approx \mathcal{P}$
    \State initialize $W_0=B$, $Z_0=[\ ]$
    \For {$j=1,\dots,N$}
    \State solve $(A+\alpha_jI_n)V_j=W_{j-1}$ for $V_j$
    \State $W_j = W_{j-1}-2\Real(\alpha_j)V_j$
    \State update 
    $Z_j=[Z_{j-1}, \sqrt{-2\Real(\alpha_j)}V_j]$
    \EndFor
    \State $Z=Z_N$
    \end{algorithmic}
\end{algorithm}

As the shift parameters $\alpha_j$ are complex numbers, the iterates $V_j$ (and thus $Z_j$) are complex-valued matrices. 
This can be avoided if the set of shift parameters is proper, i.e. closed under complex conjugation such that complex shift parameters appear in pairs with their complex conjugated version. 

We will briefly present the realification approach from \cite{Benner2012} in the revised form of \cite[Chp. 4.1.4]{Kue16}. It is denoted $\mathsf{M4}$ in \cite[Chp. 4.1.5]{Kue16}.
Let $\alpha_j$ be a complex shift and $\alpha_{j+1}= \overline{\alpha_j}$. Instead of performing two separate steps,
one with the shift $\alpha_j$ and one with $\alpha_{j+1},$ a double step involving $\alpha_j$ and $\alpha_{j+1}$ will be used.
That is, the block $\hat{Z} =\sqrt{-2\Real(\alpha_j)}\begin{bmatrix}V_j& V_{j+1}\end{bmatrix}$ of $2m$ columns is added to the current iterate $Z_{j-1}.$ This is still a complex matrix, but it can be replaced by a real one.
For this, note that
\begin{align}
	V_{j+1} &= \overline{V}_j + 2\frac{\Real(\alpha_j)}{\Imag(\alpha_j)}\Imag(V_j).
\end{align}
Let
\begin{equation}
	J=\begin{bmatrix} 1 & 1\\ {\imath}&2\frac{\Real(\alpha_j)}{\Imag(\alpha_j)}-{\imath} \end{bmatrix} \text{ and } L=\sqrt{2}\begin{bmatrix}1& 0\\ \frac{\Real(\alpha_j)}{\Imag(\alpha_j)}& \sqrt{\frac{\Real(\alpha_j)^2}{\Imag(\alpha_j)^2}+1}\end{bmatrix}.\label{eq:L}
\end{equation}
Then the block $\hat{Z}$ of $2m$ columns that is added to the factor $Z_{j-1}$ can be written as
\begin{align}
\hat{Z} 
&= \sqrt{-2\Real(\alpha_j)}\begin{bmatrix}\Real(V_j)\ \Imag(V_j) \end{bmatrix}\left(J \otimes I_m\right).
\end{align}
Observing that
\begin{align}
	JJ^{\mathsf{H}} = \begin{bmatrix}2& 2\frac{\Real(\alpha_j)}{\Imag(\alpha_j)}\\ 2\frac{\Real(\alpha_j)}{\Imag(\alpha_j)} & 4\frac{\Real(\alpha_j)^2}{\Imag(\alpha_j)^2} + 2\end{bmatrix} = LL^{\mathsf{H}}.
\end{align}
we can use
 $\breve{Z}=\sqrt{-2\Real(\alpha_j)}\begin{bmatrix}\Real(V_j)\ \Imag(V_j) \end{bmatrix}\left(L \otimes I_m\right)$ instead of $\hat{Z},$ as  $\breve{Z}\breve{Z}^{\mathsf{H}} =\hat{Z} \hat{Z}^{\mathsf{H}}.$

Altogether, to keep the iterates real, the columns
\begin{align}\label{eq:pad}
	\sqrt{-2\Real(\alpha_j)}\cdot\sqrt{2}\begin{bmatrix}\Real(V_j) + \frac{\Real(\alpha_j)}{\Imag(\alpha_j) }\Imag(V_j) & \sqrt{\frac{\Real(\alpha_j)^2}{\Imag(\alpha_j)^2}+1}\cdot\Imag(V_j) \end{bmatrix}
\end{align}
are added to the iterate $Z_{j-1}$, yielding the same approximation $Z_{j+1}$ as two steps of \Cref{alg:adi32} with shifts $\alpha_j$ and $\alpha_{j+1}=\overline{\alpha_j}$, but with real approximate Cholesky factors.

We refrain from stating the ADI iteration for solving the Sylvester equation \eqref{eq:sylv} as we will not make explicit use of it. Please see, e.g., \cite{Kue16} for a detailed description of a residual based variant or \cite{simoncini2016} for an outline of the development of the ADI iteration for Sylvester equations.

\section{Lyapunov equation}
\label{sec:gramian_as_ode}
In this section we will derive an ODE-based approximation of the solution $\mathcal{P}$ \eqref{eq:P} of the Lyapunov equation \eqref{eq:lyap}. The ODE will be solved via a Runge-Kutta method. The equivalence of our method to the ADI method \Cref{alg:adi32} for certain special Runge-Kutta methods will be discussed.

For reasons of simplicity we will not consider \eqref{eq:lyap} in full generality, only matrices $B = b\in \mathbb{R}^{n\times 1}$ with one column will be considered. The general case $B=[b_1, \dots, b_m]\in \mathbb{R}^{n\times m}$ can be reduced to
\begin{align}\label{eq:mimo_gramian_sum}
\begin{split}
	\mathcal{P} &= \int_0^\infty e^{At}BB^{\mathsf{T}}e^{A^{\mathsf{T}}t}\,\mathrm dt\\
	&= \sum_{i=1}^{m}\int_0^\infty e^{At}b_{i}b_{i}^{\mathsf{T}}e^{A^{\mathsf{T}}t}\,\mathrm dt.
\end{split}
\end{align}
Thus, our results extend to the case $m>1$ easily.

We will make use of the time dependent Gramian $P(t)$ which is given by
\begin{align}
	P(t) &\coloneqq \int_0^t e^{A\tau}bb^{\mathsf{T}}e^{A^{\mathsf{T}}\tau}\,\mathrm d\tau = \int_0^t h(\tau)h(\tau)^{\mathsf{T}}\,\mathrm d\tau 
\end{align}
where $h(t) \coloneqq e^{At}b.$
The functions $P(t)$ and $h(t)$ are the solutions of the system of ODEs
\begin{equation}
\begin{aligned}
	\frac{\mathrm d}{\mathrm dt}P(t) &= h(t)h(t)^{\mathsf{T}}, & P(0)=0,\\
	\frac{\mathrm d}{\mathrm dt}h(t) &= Ah(t), & h(0)=b.
\end{aligned}
	\label{eq:gramian_ode}
\end{equation}
Vectorizing \eqref{eq:gramian_ode} allows to write the system in the form \eqref{eq:ode}
\begin{align}\label{eq:myode}
	\frac{\mathrm d}{\mathrm dt}\begin{bmatrix}\vect(P(t))\\h(t)\end{bmatrix}=\begin{bmatrix}\vect(h(t)h(t)^{\mathsf{T}})\\Ah(t)\end{bmatrix} = f(t,y(t))
\end{align}
for the high dimensional solution function $y(t)  =[\vect(P(t))^\mathsf{T} ~~ h(t)^\mathsf{T}]^\mathsf{T}$, $y\colon \mathbb{R} \rightarrow \mathbb{R}^{n^2+n}$ which needs to be determined.
Clearly, $y(0) = [0^\mathsf{T}~b^\mathsf{T}]^\mathsf{T}.$

\subsection{Approximating $\mathcal{P}$ by Runge-Kutta methods}\label{sec31}
In order to solve \eqref{eq:myode}, we will make use of a Runge-Kutta  method with tableau \eqref{eq:butcher}.
For ease of notation, we will use $y(t) =[v(t)^\mathsf{T}~ h(t)^\mathsf{T}]^\mathsf{T},$
that is, $v(t) = \vect(P(t)).$ Moreover, the vector $k_i^{(j)}$ \eqref{eq:runge-kutta-k} is written in block form
\[
k_i^{(j)} = \begin{bmatrix}
\tilde{k}_i^{(j)} \\ \hat{k}_i^{(j)}\end{bmatrix}
\]
corresponding to the two blocks of $y(t) =[v(t)^\mathsf{T}~ h(t)^\mathsf{T}]^\mathsf{T}.$ 
We obtain
\begin{align}\label{eq:RK_vec}
\begin{split}
	v_j &= v_{j-1} + \omega_j \sum_{i=1}^s \beta_i \tilde{k}_i^{(j)}, \qquad v_0 = 0\in \mathbb{R}^{n^2},\\
	h_j &= h_{j-1} + \omega_j \sum_{i=1}^s \beta_i \hat{k}_i^{(j)}, \qquad  h_0 = b\in \mathbb{R}^{n},
\end{split}
\end{align}
with
\begin{align}
k_i^{(j)} &= \begin{bmatrix}
\tilde{k}_i^{(j)} \\ \hat{k}_i^{(j)}\end{bmatrix}
= f \left( t_{j-1}+\gamma_i\omega_j, \begin{bmatrix} 
v_{j-1} + \omega_j \sum_{\ell=1}^s \lambda_{i\ell} \tilde{k}_\ell^{(j)}\\
h_{j-1} + \omega_j \sum_{\ell=1}^s \lambda_{i\ell} \hat{k}_\ell^{(j)}
\end{bmatrix}\right) \nonumber\\
&= \begin{bmatrix}
\vect\left( (h_{j-1} + \omega_j \sum_{\ell=1}^s \lambda_{i\ell} \hat{k}_\ell^{(j)})
(h_{j-1} + \omega_j \sum_{\ell=1}^s \lambda_{i\ell} \hat{k}_\ell^{(j)})^\mathsf{H}    \right)\\
Ah_{j-1} + \omega_j \sum_{\ell=1}^s \lambda_{i\ell}A\hat{k}_\ell^{(j)} 
\end{bmatrix}. \label{eq:ohnektilde}
\end{align}
Please note the use of $\mathsf{H}$ instead of $\mathsf{T}$ in the expression above due to the
possibly complex valued $\lambda_{ij}$. Moreover, note that the expression in \eqref{eq:ohnektilde}
does not involve $\tilde{k}_i^{(j)}$.

\begin{rem}
Another idea to solve \eqref{eq:gramian_ode} is to employ two different $s$-stage Runge-Kutta methods, e.g.,
a Butcher tableau with $\Lambda$ and $\beta$ for $h(t)$ and a second Butcher tableau with $\hat{\Lambda}$ and 
$\hat{\beta}$ for $P(t).$ The only difference to \eqref{eq:RK_vec} and \eqref{eq:ohnektilde} as above is that in 
\eqref{eq:RK_vec} the equation for $v_j$ changes to $v_j =  v_{j-1} + \omega_j \sum_{i=1}^s \hat{\beta}_i \tilde{k}_i^{(j)}.$
The matrix $\hat{\Lambda}$ does not appear as the right hand side of $P'(t) = h(t)h(t)^\mathsf{T}$ does not depend on $P(t).$ We do not consider this any further here, as it would turn out in Section \ref{sec:invariant} that  $\hat{\beta}$ has to be chosen as $\hat{\beta}=\beta.$
\end{rem}

De-vectorizing the first equation of \eqref{eq:RK_vec} we obtain the iteration
\begin{align}\label{eq:RK_mat}
\begin{split}
P_j &= P_{j-1} + \omega_j \sum_{i=1}^s \beta_i \mathfrak{h}_i^{(j)} (\mathfrak{h}_i^{(j)})^\mathsf{H}, \qquad j = 1, \ldots, N\\
h_j &= h_{j-1} + \omega_j \sum_{i=1}^s \beta_i \hat{k}_i^{(j)}
\end{split}
\end{align}
with $P_0 = 0 \in \mathbb{R}^{n \times n},$  $h_0 = b \in \mathbb{R}^n$ and
\begin{align}
\mathfrak{h}_i^{(j)} &= h_{j-1} + \omega_j \sum_{\ell=1}^s \lambda_{i\ell} \hat{k}_\ell^{(j)},\\
\hat{k}_i^{(j)} &= Ah_{j-1} + \omega_j \sum_{\ell=1}^s \lambda_{i\ell}A\hat{k}_\ell^{(j)}
= A\mathfrak{h}_i^{(j)}
\end{align}
for $i =1, \ldots, s.$
Please note that due to $\Lambda \in \mathbb{C}^{s \times s}$ and $\beta\in \mathbb{C}^{s}$ all iterates $\mathfrak{h}_i^{(j)}$, $h_j$ as well as $\hat{k}_i^{(j)}$ will be complex valued vectors, while $P_j \in \mathbb{C}^{n \times n}$, $j = 1, \ldots, N.$

First all $\hat{k}_i^{(j)}$, $i = 1, \ldots, s$ need to be determined for a given $j.$ Then the corresponding iterates  $\mathfrak{h}_i^{(j)}$, $P_j$ and $h_j$ can be computed. Let  $K_{j}=[\hat{k}_1^{(j)},\ldots,\hat{k}_s^{(j)}]$.
Then we have
\begin{equation}
\label{def:KAH}
K_j = [ Ah_{j-1}, \ldots, Ah_{j-1}] + \omega_j AK_j\Lambda^\mathsf{T}.
\end{equation}
Via vectorization \eqref{def:KAH}  is reformulated as a linear system of equations of size $ns \times ns$
\begin{align}
	\label{eq:Kvec}
	\left(I_{ns} -\omega_j(\Lambda\otimes A)\right)\vect(K_{j}) &= \left[ (A{h}_{j-1})^{\mathsf{T}},\dots,(A{h}_{j-1})^{\mathsf{T}}\right]^{\mathsf{T}}\\
	\label{eq:Kveckron}
	&= (I_s\otimes A)(\mathds{1}_s\otimes I_n)h_{j-1}. 
\end{align}
Let $\mu_1, \dots, \mu_s$ and $\lambda_1, \dots, \lambda_n$ be the eigenvalues of $\Lambda$ and $A$ respectively. Then the
eigenvalues of $I_{ns} -\omega_j(\Lambda\otimes A)$ are given by
$1 - \omega_j \mu_p \lambda_q$, $p = 1, \ldots, s$, $q = 1, \ldots, n$.
Thus the solution of \eqref{eq:Kvec} is unique if and only if
\begin{align}\label{eq:eigcond}
	\mu_p\neq \frac{1}{\omega_j\lambda_q}
\end{align}
for all $p = 1, \ldots, s$ and $q = 1, \ldots, n$. 
Thus, in case we require $\Lambda$ to be chosen such that $\mu_p\in \mathbb{C}_{+},$ \eqref{eq:Kvec} has an unique solution.

An alternative to determining $K_j$ is given by the relation 
$K_j=A\mathcal{H}_j$ with $\mathcal{H}_j = [\mathfrak{h}_1^{(j)}, \ldots, \mathfrak{h}_s^{(j)}].$ 
This implies that
\begin{align}
	\label{def:KAHH}
	\mathcal{H}_{j} &=[h_{j-1}, \ldots, h_{j-1}] + \omega_j A \mathcal{H}_{j}\Lambda^{\mathsf{T}}
\end{align}
needs to be solved. One way to realize this is, e.g., by vectorizing the equation to transform it into a linear system of equations with the same coefficient matrix $I_{ns} -\omega_j(\Lambda\otimes A)$  as above. This system of equations can be solved uniquely if and only if the eigenvalues of $\Lambda$ and $A$ satisfy \eqref{eq:eigcond}. In particular, note that, as for \eqref{eq:Kvec},  we can not choose $\Lambda$ arbitrarily, at least \eqref{eq:eigcond} has to hold.

With the help of $K_j$ and $\mathcal{H}_j$ \eqref{eq:RK_mat} can be written as 
\begin{align}
	\label{eq:RK_matmat}
\begin{split}
P_j &= P_{j-1} + \omega_j \mathcal{H}_j \diag(\beta) \mathcal{H}_j^\mathsf{H}, \qquad j = 1, \ldots, N\\
h_j &= h_{j-1} + \omega_j  K_j\beta,
\end{split}
\end{align}
where $\beta \in \mathbb{C}^s$ is as in \eqref{eq:butcher}.
As
\[ 
P_j^\mathsf{H} = P_{j-1}^\mathsf{H} + \omega_j \mathcal{H}_j \diag(\overline{\beta}) \mathcal{H}_j^\mathsf{H},
\]
$P_j$ is only Hermitian in case $\beta \in \mathbb{R}^s.$ If in addition $\beta \in \mathbb{R}_+^s,$ then we can decompose
$\omega_j \diag(\beta)$ into $\diag(\omega_j\beta)^{\frac{1}{2}}\diag(\omega_j\beta)^{\frac{1}{2}}$ 
where $\diag(\omega_j \beta)^{\frac{1}{2}} \in \mathbb{R}^{s \times s}$ denotes the diagonal matrix with diagonal entries $\sqrt{\omega_j\beta_i}$,\, $i=1, \ldots, s$. This allows to express $\omega_j \mathcal{H}_j \diag(\beta) \mathcal{H}_j^\mathsf{H}$ as
\[
\omega_j \mathcal{H}_j \diag(\beta) \mathcal{H}_j^\mathsf{H}  = \left(\mathcal{H}_j \diag(\omega_j \beta)^{\frac{1}{2}}\right) \left(\mathcal{H}_j \diag(\omega_j \beta)^{\frac{1}{2}}\right) ^\mathsf{H}.
\]
Thus, for $\beta \in \mathbb{R}^s_+,$ $P_j$ is by construction a positive semi-definite matrix and can be expressed as
$P_j = Z_jZ_j^{\mathsf{H}}$ for some complex valued matrix $Z_j.$ Hence we have
\begin{align}\label{eq:ZZH}
Z_j Z_j^{\mathsf{H}} &= Z_{j-1}Z_{j-1}^{\mathsf{H}} + \omega_j \mathcal{H}_j \diag(\beta) \mathcal{H}_j^\mathsf{H}\\
&= \left[ Z_{j-1},  \mathcal{H}_j \diag(\omega_j \beta)^{\frac{1}{2}} \right]
\left[ Z_{j-1},  \mathcal{H}_j \diag(\omega_j \beta)^{\frac{1}{2}} \right]^\mathsf{H} \nonumber
\end{align}
where $\diag(\omega_j \beta)^{\frac{1}{2}} \in \mathbb{C}^{s \times s}$ denotes the diagonal matrix with diagonal entries $\sqrt{\omega_j\beta_i}$, $i=1, \ldots, s$. 

Instead of iterating on $P_j$ as in \eqref{eq:RK_matmat}, the above observation allows us in case $\beta\in \mathbb{R}^s_+$ to iterate on the low rank factor
\begin{align}
	Z_j &= [Z_{j-1}, \mathcal{H}_j\diag(\omega_j\beta)^{\frac{1}{2}}] \in \mathbb{C}^{n \times js}
\end{align}
which gains $s$ additional columns in every iteration step. The procedure to obtain the Gramian approximation described in this section is summarized in  \Cref{alg:quad}. We need to require that $\beta\in \mathbb{R}^s_+$ such that $P_j$ is positive semi-definite and that the eigenvalues of $\Lambda$ satisfy \eqref{eq:eigcond} in order to ensure that all linear system solves have a unique solution.
In case $N$ is such that $sN$ is less than $n,$ then $Z_j$ are low rank factors of $P_j$, $j = 1, \ldots, N$.

\begin{algorithm}[ht]
\caption{Low rank solution to \eqref{eq:lyap} via an $s$-stage Runge-Kutta method}
    \label{alg:quad}
    \begin{algorithmic}[1] 
	    \Input $A \in \mathbb{R}^{n \times n}$ stable, $b \in \mathbb{R}^n$, positive time step sizes $\left\{ \omega_1,\dots,\omega_{N} \right\}$ and a Butcher tableau (\ref{eq:butcher}) with $\Lambda \in \mathbb{C}^{s \times s}$ and $\beta \in \mathbb{R}^s_+$ which satisfies \eqref{eq:eigcond}
	    \Output  $Z\in \mathbb{C}^{n\times sN}$ with $Z Z^{\mathsf{H}}\approx \mathcal{P}$
    \State initialize $h_0=b$, $Z_0=[\ ]$
    \For {$j=1,\dots,N$}
    \State solve $K_j=[Ah_{j-1}, \ldots, Ah_{j-1}] + \omega_j AK_j\Lambda^{\mathsf{T}}$ for $K_j \in \mathbb{C}^{n \times s}$
    \State $\mathcal{H}=[h_{j-1}, \ldots, h_{j-1}] + \omega_j K_j\Lambda^{\mathsf{T}}$
    \State update 
$Z_j=[Z_{j-1}, \mathcal{H}\diag(\omega_j\beta)^{\frac{1}{2}}]$
    \State $h_{j} = h_{j-1} + \omega_jK_j\beta$
    \EndFor
    \State $Z=Z_N$
    \end{algorithmic}
\end{algorithm}

\subsection{Runge-Kutta methods which preserve an algebraic invariant}\label{sec:invariant}
Next we will take a closer look at  all possible Butcher tableaus in order to derive additional conditions for
suitable methods.

First, observe that the time dependent Gramian $P(t)$ will not exactly satisfy \eqref{eq:lyap}, a residual, the so-called Lyapunov residual,  will remain
\begin{equation}\label{eq:lyap_res}
\mathcal{L}(t) \coloneqq AP(t) + P(t)A^\mathsf{T} + bb^\mathsf{T}.
\end{equation}
Next, reconsider the derivative of $P(t)$ \eqref{eq:gramian_ode}
\begin{align}
	\frac{\mathrm d}{\mathrm dt}P(t) &= h(t)h(t)^{\mathsf{T}}\\
&=h(0)h(0)^{\mathsf{T}} + \int_0^t \frac{\mathrm d}{\mathrm d\tau} h(\tau)h(\tau)^{\mathsf{T}} \,\mathrm d\tau\\
	&= bb^{\mathsf{T}} + \int_0^t \left(Ah(\tau)h(\tau)^{\mathsf{T}}+h(\tau)h(\tau)^{\mathsf{T}}A^{\mathsf{T}}\right) \,\mathrm d\tau\\
	&= bb^{\mathsf{T}} + A\int_0^t h(\tau)h(\tau)^{\mathsf{T}}\,\mathrm d\tau +\int_0^t h(\tau)h(\tau)^{\mathsf{T}}\,\mathrm d\tau A^{\mathsf{T}}\\
	&= bb^{\mathsf{T}} + AP(t) +P(t)A^{\mathsf{T}}.
\end{align}

Thus, we have
	\begin{align}
		\mathcal{L}(t) = AP(t) + P(t)A^{\mathsf{T}} + bb^{\mathsf{T}} = h(t)h(t)^{\mathsf{T}},
		\label{eq:invariant}
	\end{align}
	for all $t$. Obviously, due to the right-hand side, the Lyapunov residual is of rank one.

	Our key idea is to use only those Butcher tableaus which guarantee that the iterates $P_j = Z_jZ_j^\mathsf{H}$ and $h_j$ satisfy the algebraic invariant \eqref{eq:invariant} in the sense
\begin{align}
	AP_j +P_jA^{\mathsf{T}} + bb^{\mathsf{T}} = h_jh_j^{\mathsf{H}}.
	\label{eq:invariant_complex}
\end{align}
Please note, that as $\Lambda \in \mathbb{C}^{s \times s}$ and $\beta\in \mathbb{C}^{s},$ our iterates $P_j$ and $h_j$
may be complex valued and thus, we need to modify $\mathsf{T}$ in \eqref{eq:invariant} to $\mathsf{H}$ in\eqref{eq:invariant_complex}.

Before we discuss which Butcher tableaus allow for \eqref{eq:invariant_complex} let us give an interpretation of \eqref{eq:invariant} and \eqref{eq:invariant_complex}.
Consider all tuples $(P,h),$ for which the invariant \eqref{eq:invariant_complex} is satisfied. They are located on the manifold
\begin{align}
	\mathcal{M} \coloneqq \left\{ (P,h) \text{ with }  P\in \mathbb{C}^{n\times n},\ h\in \mathbb{C}^{n\times 1} \mid AP+PA^{\mathsf{T}} + bb^{\mathsf{T}} - hh^{\mathsf{H}} = 0 \right\}.
	\label{def:lrr_manifold}
\end{align}
The solution $(P(t),h(t))$  of \eqref{eq:gramian_ode} lies on $\mathcal{M}$ for all times $t\in \mathbb{R}$ because of \eqref{eq:invariant}. As the Lyapunov residual \eqref{eq:invariant} is of rank one we will call $\mathcal{M}$ the {rank-one residual manifold}. The time dependent Gramian evolves on the rank-one residual manifold with $h(t)\to 0$ for $t\to \infty;$ see  Fig. \ref{fig:manifold1}. Enforcing \eqref{eq:invariant_complex} for the iterates $(P_j, h_j)$, in general we obtain $P_j$ which do not approximate the trajectory of the time dependent Gramian $P(t)$ but which are located on $\mathcal{M}.$ Therefore the approximation $P_N\approx \mathcal{P}$ is good when the iterate $h_N$ is small, because then the tuple $(P_N,h_N)\in \mathcal{M}$ is located close to $(\mathcal{P},0)\in \mathcal{M}$.

\begin{figure}
\def\svgwidth{.8\textwidth}
\centerline{ {\small\input{./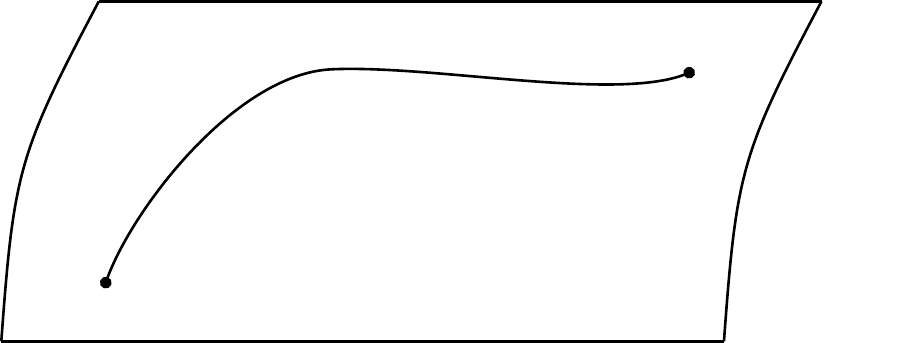_tex}}}
\caption{Solution of $\dot h=Ah,\, \dot P=hh^{\mathsf{T}}$ and iterates from \Cref{alg:quad} evolving on the rank-one residual manifold $\mathcal{M}$. }
	\label{fig:manifold1}
\end{figure}

\begin{thm}
	Let $A\in \mathbb{R}^{n \times n}$ be stable, $b \in \mathbb{R}^n$ and $\omega_i >0$, $i = 1, \ldots, j$. Consider a Butcher tableau \eqref{eq:butcher} with $\Lambda \in \mathbb{C}^{s \times s}$ and $\beta \in \mathbb{R}^s_+$ which satisfies \eqref{eq:eigcond}.
	After $j$ steps of \Cref{alg:quad}  the equation
	\begin{align}\label{eq:induction}
\begin{split}
		& AP_j + P_jA^{\mathsf{T}} + bb^{\mathsf{T}} \\
	&\qquad = h_jh_j^{\mathsf{H}} + \sum_{i=1}^{j}\omega_i^{2} K_{i}\left(\diag(\beta)\overline{\Lambda} + \Lambda^\mathsf{T}\diag(\beta)-\beta\beta^{\mathsf{T}}\right)K_{i}^{\mathsf{H}}
\end{split}
\end{align}
	holds for $P_j = Z_jZ_j^\mathsf{H}.$
	Thus for $K_i\neq 0$, $i = 1, \ldots, j$, the iterates $P_j$ and $h_j$ satisfy \eqref{eq:invariant_complex}  if and only if 
	\begin{align}
		\label{eq:famous_m}
		\diag(\beta)\overline{\Lambda} + \Lambda^{\mathsf{T}}\diag(\beta)-\beta\beta^{\mathsf{T}}=0
	\end{align}
	holds.
	\label{thm:invariant_approx}
\end{thm}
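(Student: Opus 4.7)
I would prove \eqref{eq:induction} by induction on $j$ and then read off the equivalence as a corollary. The base case $j=0$ is immediate since $P_0=0$, $h_0=b$, and the empty sum is zero, so both sides equal $bb^{\mathsf{T}}=bb^{\mathsf{H}}$ (using that $b$ is real).

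For the inductive step, the recursion \eqref{eq:RK_matmat} gives $P_j-P_{j-1}=\omega_j\mathcal{H}_j\diag(\beta)\mathcal{H}_j^{\mathsf{H}}$ and $h_j-h_{j-1}=\omega_j K_j\beta$. The two identities driving the calculation are $A\mathcal{H}_j=K_j$ (from $\hat{k}_i^{(j)}=A\mathfrak{h}_i^{(j)}$) and $\mathcal{H}_j=h_{j-1}\mathds{1}_s^{\mathsf{T}}+\omega_j K_j\Lambda^{\mathsf{T}}$, obtained by rearranging \eqref{def:KAHH} and substituting $A\mathcal{H}_j=K_j$. Using these together with $\mathds{1}_s^{\mathsf{T}}\diag(\beta)=\beta^{\mathsf{T}}$ (valid since $\beta\in\mathbb{R}^s_+$), I would compute
\begin{align*}
A(P_j-P_{j-1})+(P_j-P_{j-1})A^{\mathsf{T}}
&= \omega_j K_j\diag(\beta)\mathcal{H}_j^{\mathsf{H}}+\omega_j\mathcal{H}_j\diag(\beta)K_j^{\mathsf{H}}\\
&= \omega_j\bigl(K_j\beta h_{j-1}^{\mathsf{H}}+h_{j-1}\beta^{\mathsf{T}}K_j^{\mathsf{H}}\bigr)\\
&\quad+\omega_j^2 K_j\bigl(\diag(\beta)\overline{\Lambda}+\Lambda^{\mathsf{T}}\diag(\beta)\bigr)K_j^{\mathsf{H}}.
\end{align*}
On the other hand, expanding $h_jh_j^{\mathsf{H}}=(h_{j-1}+\omega_j K_j\beta)(h_{j-1}+\omega_j K_j\beta)^{\mathsf{H}}$ produces $h_{j-1}h_{j-1}^{\mathsf{H}}+\omega_j(K_j\beta h_{j-1}^{\mathsf{H}}+h_{j-1}\beta^{\mathsf{T}}K_j^{\mathsf{H}})+\omega_j^2 K_j\beta\beta^{\mathsf{T}}K_j^{\mathsf{H}}$. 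Adding the two displays and using the inductive hypothesis at level $j-1$ causes the cross terms $\omega_j(K_j\beta h_{j-1}^{\mathsf{H}}+h_{j-1}\beta^{\mathsf{T}}K_j^{\mathsf{H}})$ to cancel, and the quadratic piece collapses exactly into the $i=j$ summand $\omega_j^2 K_j(\diag(\beta)\overline{\Lambda}+\Lambda^{\mathsf{T}}\diag(\beta)-\beta\beta^{\mathsf{T}})K_j^{\mathsf{H}}$, completing the induction.

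The equivalence then follows. The ``if'' direction is immediate: \eqref{eq:famous_m} annihilates every summand on the right-hand side of \eqref{eq:induction}. For the ``only if'' direction, comparing \eqref{eq:induction} at steps $j$ and $j-1$ yields $\omega_j^2 K_j M K_j^{\mathsf{H}}=0$ where $M$ denotes the matrix in \eqref{eq:famous_m}; under the assumption $K_j\neq 0$ (together with the obvious non-degeneracy of the columns of $K_j$) this forces $M=0$.

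The main obstacle is not conceptual but clerical: one has to keep careful track of the transpose-versus-conjugate-transpose distinction introduced by admitting complex $\Lambda$ while $\beta$ remains real, and of the $\omega_j$ powers. Once the two structural identities $A\mathcal{H}_j=K_j$ and $\mathcal{H}_j=h_{j-1}\mathds{1}_s^{\mathsf{T}}+\omega_j K_j\Lambda^{\mathsf{T}}$ are in hand, the remaining steps are a routine algebraic expansion in which the cross-terms coming from the Lyapunov operator applied to $\mathcal{H}_j\diag(\beta)\mathcal{H}_j^{\mathsf{H}}$ are matched against those arising from $h_jh_j^{\mathsf{H}}-h_{j-1}h_{j-1}^{\mathsf{H}}$.
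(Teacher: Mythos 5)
Your proposal is correct and follows essentially the same route as the paper: induction on $j$, with the inductive step driven by the identities $A\mathcal{H}_j=K_j$ and $\mathcal{H}_j=[h_{j-1},\ldots,h_{j-1}]+\omega_j K_j\Lambda^{\mathsf{T}}$, the cross terms reassembled into $h_jh_j^{\mathsf{H}}$ (your explicit cancellation is the paper's add-and-subtract of $\omega_j^2 K_j\beta\beta^{\mathsf{T}}K_j^{\mathsf{H}}$), and the equivalence read off at the end. Even your caveat on the "only if" direction (needing the columns of $K_j$ to be non-degenerate, i.e.\ $K_jMK_j^{\mathsf{H}}=0$ forcing $M=0$) matches the level of rigor of the paper, which simply calls that step immediate.
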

\begin{proof}
	For $j=0$ the statement is obviously true as $P_0 = Z_0Z_0^\mathsf{T}=0$ and $h_0=b$.

	For $j\in\mathbb{N}$ we first use \eqref{eq:ZZH} 
and then inductively \eqref{eq:induction} for $j-1$
\begin{equation}\begin{aligned}\label{eq:rek-inv}
		& AP_j + P_jA^{\mathsf{T}} + bb^{\mathsf{T}}\\
		&= AP_{j-1} + P_{j-1}A^{\mathsf{T}} +bb^{\mathsf{T}}
+ \omega_j A\mathcal{H}_j\diag(\beta)\mathcal{H}_j^{\mathsf{H}}
 + \omega_j \mathcal{H}_j\diag(\beta)\mathcal{H}_j^{\mathsf{H}}A^{\mathsf{T}}
\\
		&= h_{j-1}h_{j-1}^{\mathsf{H}} + \sum_{i=1}^{j-1}\omega_i^{2} K_{i}\left(\diag(\beta)\overline{\Lambda} + (\diag(\beta)\Lambda)^{\mathsf{T}}-\beta\beta^{\mathsf{T}}\right)K_{i}^{\mathsf{H}}\\
		&\qquad+ \omega_j A\mathcal{H}_j\diag(\beta)\mathcal{H}_j^{\mathsf{H}}
 + \omega_j \mathcal{H}_j\diag(\beta)\mathcal{H}_j^{\mathsf{H}}A^{\mathsf{T}}.
\end{aligned}\end{equation}
	Using $A\mathcal{H}_j=K_j$ and expanding $\mathcal{H}_j$ as in \eqref{def:KAHH} we obtain
	\begin{align}
		A\mathcal{H}_j\diag(\beta)\mathcal{H}_j^{\mathsf{H}} &= K_j\diag(\beta)\left([h_{j-1},\ldots, h_{j-1}]+\omega_j K_j\Lambda^{\mathsf{T}}\right)^{\mathsf{H}}\\
	&= K_j\beta h_{j-1}^{\mathsf{H}} + \omega_j K_j\diag(\beta)\overline{\Lambda} K_j^{\mathsf{H}}.
	\end{align}
	Inserting this in \eqref{eq:rek-inv} and adding a zero we find
	\begin{align}
		& AP_j + P_jA^{\mathsf{T}} + bb^{\mathsf{T}}\\
		&= h_{j-1}h_{j-1}^{\mathsf{H}} + \sum_{i=1}^{j-1}\omega_i^{2} K_{i}\left(\diag(\beta)\overline{\Lambda} + (\diag(\beta)\Lambda)^{\mathsf{T}}-\beta\beta^{\mathsf{T}}\right)K_{i}^{\mathsf{H}}\\
		&\qquad +\omega_j K_j\beta h_{j-1}^{\mathsf{H}} + \omega_j^{2}K_j\diag(\beta)\overline{\Lambda} K_j^{\mathsf{H}}
		+ \omega_j h_{j-1}\beta^{\mathsf{T}}K_j^{\mathsf{H}} + \omega_j^{2}K_j\Lambda^{\mathsf{T}}\diag(\beta)K_j^{\mathsf{H}}\\
		&\qquad+\omega_j^{2}K_j\beta\beta^{\mathsf{T}}K_j^{\mathsf{H}}-\omega_j^{2}K_j\beta\beta^{\mathsf{T}}K_j^{\mathsf{H}}\\
&= h_{j-1}h_{j-1}^{\mathsf{H}}+\omega_j K_j\beta h_{j-1}^{\mathsf{H}} + \omega_j h_{j-1}\beta^{\mathsf{T}}K_j^{\mathsf{H}} +\omega_j^{2}K_j\beta\beta^{\mathsf{T}}K_j^{\mathsf{H}}\\
		&\qquad +\omega_j^{2}K_j\left(\diag(\beta)\overline{\Lambda} + \Lambda^{\mathsf{T}}\diag(\beta)-\beta\beta^{\mathsf{T}}\right)K_j^{\mathsf{H}}\\
		&\qquad+\sum_{i=1}^{j-1}\omega_i^{2} K_{i}\left(\diag(\beta)\overline{\Lambda} + (\diag(\beta)\Lambda)^{\mathsf{T}}-\beta\beta^{\mathsf{T}}\right)K_{i}^{\mathsf{H}}\\
		&= h_jh_j^{\mathsf{H}} + \sum_{i=1}^{j}\omega_i^{2} K_{i}\left(\diag(\beta)\overline{\Lambda} + (\diag(\beta)\Lambda)^{\mathsf{T}}-\beta\beta^{\mathsf{T}}\right)K_{i}^{\mathsf{H}},
	\end{align}
as $h_j = h_{j-1}+\omega_j K_j \beta.$ This proves the first statement.
With $K_i\neq 0$ and $\omega_i > 0$ for $i = 1, \ldots, j,$  the second statement is immediate. This concludes the proof.
\end{proof}

There are numerous Butcher tableaus for which \eqref{eq:famous_m} holds.
First, observe that the diagonal entries of the equation $\diag(\beta)\overline{\Lambda} + (\diag(\beta)\Lambda)^{\mathsf{T}}-\beta\beta^{\mathsf{T}}=0$ imply that
\[ \beta_j \overline{\lambda_{jj}} + \beta_j \lambda_{jj} - \beta_j^2 = \beta_j \left(2\Real(\lambda_{jj})-\beta_j \right) = 0\]
for $j = 1, \ldots, s.$ Thus either $\beta_j = 0,$ or
\[
\beta_j = 2\Real(\lambda_{jj}), \qquad j = 1, \ldots, s.
\]
As $\beta_j \in \mathbb{R}_+$ is required, this implies that the diagonal elements of $\Lambda$ have to be in $\mathbb{C}_+$ whenever \eqref{eq:famous_m} is required.

The simplest $1$-stage Butcher tableaus satisfying \eqref{eq:famous_m} are given 
for an arbitrary (complex) number $\mu\in \mathbb{C}$ by
\begin{align}
	\Lambda=\mu,\ \beta=2\Real(\mu).
	\label{eq:mu_tab}
\end{align}	
The implicit midpoint rule
\[
\Lambda=\frac{1}{2},\ \beta=1
\]
is a prominent example of such a $1$-stage tableau. 
A simple  $2$-stage tableau which satisfies  \eqref{eq:famous_m} is the $2$-stage implicit Runge-Kutta method
\begin{align}\Lambda=\begin{bmatrix}
	\frac{1}{4} & \frac{1}{4}-\frac{1}{6}\sqrt{3}\\
	\frac{1}{4}+\frac{1}{6}\sqrt{3} & \frac{1}{4}
\end{bmatrix},\ \beta=\begin{bmatrix}
	\frac{1}{2}\\
	\frac{1}{2}
\end{bmatrix}.
\end{align}
The latter two methods belong to the family of Gauss-Legendre methods which are special $s$-stage implicit Runge-Kutta methods based on Gauss-Legendre quadrature. For $s\in \mathbb{N}$, the respective method is unique and satisfies \eqref{eq:famous_m}, see \cite[Lemma~5.3]{iserles_2008} and the subsequent corollary, where the matrix $M$ corresponds to the left-hand side of \eqref{eq:famous_m}.

Another family of methods for which \eqref{eq:famous_m} holds  is given by DIRK methods of the form
\begin{align}
	\Lambda = \begin{bmatrix}
		\mu_1& 0& 0& \cdots & \cdots & 0\\
		2\Real(\mu_1) & \mu_2& 0& \cdots& \cdots &0\\
2\Real(\mu_1)&2\Real(\mu_2)& \mu_3 &\ddots&  &\vdots \\
		\vdots & \vdots& \ddots&\ddots&\ddots &\vdots \\
\vdots&\vdots&&\ddots& \mu_{s-1} &0\\
		2\Real(\mu_1)&2\Real(\mu_2)&\cdots&\cdots& 2\Real(\mu_{s-1}) & \mu_s
	\end{bmatrix},\	\beta=\begin{bmatrix}
		2\Real(\mu_1)\\
		2\Real(\mu_2)\\
		\vdots\\
		2\Real(\mu_s)
	\end{bmatrix}
	\label{eq:cnf_tableau}
\end{align}
with $\mu_1, \dots, \mu_s \in \mathbb{C}_+$. In this case, it is easy to
verify whether the necessary and sufficient condition \eqref{eq:eigcond} 
\[
	\mu_p\neq \frac{1}{\omega_j\lambda_q}=\frac{1}{\omega_j |\lambda_q|^2}\overline{\lambda_q},
\]
is satisfied for all $p = 1, \ldots, s$ and $q = 1, \ldots, n$. As $\lambda_q$ is an eigenvalue of the stable matrix $A$, 
we have $\Real(\overline{\lambda_q}) <0$. 
Thus, for stable $A$ any DIRK method with a tableau \eqref{eq:cnf_tableau} satisfies \eqref{eq:eigcond}.

\subsection{Varying Butcher tableaus instead of one fixed Butcher tableau}\label{sec33}
Now we change the point of view on the Runge-Kutta methods. So far we have used the same tableau with $\Lambda$, $\beta$ in every iteration step. The time step sizes $\omega_j$ for $j=1, \ldots, N$ may vary. In the following we allow for varying tableaus with $\Lambda^{(j)} \in \mathbb{C}^{s \times s}$, $\beta^{(j)}\in \mathbb{C}^s$ during the iteration, in particular the matrices $\Lambda^{(j)}$ do not need to have the same eigenvalues. This implies the iteration 
	\begin{align}
		y_j &= y_{j-1} +  \omega_j\sum_{i=1}^{s}\beta^{(j)}_ik_i^{(j)}, & j &= 1, \ldots, N,\\
		k_i^{(j)} &= f\Big(t_{j-1}+\gamma^{(j)}_i,\, y_{j-1}+\omega_j\sum_{\ell=1}^{s}\lambda^{(j)}_{i\ell}k_\ell^{(j)}\Big),\ & i&=1,\dots,s,
	\end{align}
instead of \eqref{eq:runge-kutta} and \eqref{eq:runge-kutta-k}.

Clearly, we would like to choose $\Lambda^{(j)}$, $\beta^{(j)}$ such that \eqref{eq:famous_m} is satisfied. In that case, $\omega_j\Lambda^{(j)},$ $\omega_j\beta^{(j)}$ also satisfy \eqref{eq:famous_m}. Thus, there is no need for choosing a different time step size in every iteration step, this is in a sense already dealt with by allowing different  $\Lambda^{(j)}$, $\beta^{(j)}$ in every iteration step. Therefore, we set $\omega_j=1$ for $j=1, \dots, N,$ whenever we allow for different  $\Lambda^{(j)}$, $\beta^{(j)}$ in every iteration step,
\begin{align}
		\label{eq:runge-kutta_ohne}
		y_j &= y_{j-1} + \sum_{i=1}^{s}\beta^{(j)}_ik_i^{(j)}, & j &= 1, \ldots, N,\\
		\label{eq:runge-kutta-k_ohne}
		k_i^{(j)} &= f\Big(t_{j-1}+\gamma^{(j)}_i,\, y_{j-1}+\sum_{\ell=1}^{s}\lambda^{(j)}_{i\ell}k_\ell^{(j)}\Big),\ & i&=1,\dots,s.
	\end{align}
	\Cref{alg:quad} has to be modified accordingly. In step 3 and 4 the term $\omega_j\Lambda$ has to be replaced by $\Lambda^{(j)},$ while in step 5 and 6 the term  $\omega_j\beta$ has to be replaced by $\beta^{(j)}.$ Apparently \Cref{thm:invariant_approx} remains true even when different tableaus are used in every step as long as \eqref{eq:famous_m} holds for $\Lambda^{(i)}$ and $\beta^{(i)}$, $i=1, \dots, j.$ In its proof the tableaus $\Lambda$, $\beta$ have to be replaced as described above for \Cref{alg:quad}.

\subsection{$1$-stage Runge-Kutta methods}\label{sec34}
Let us consider the case $s=1$ and the Butcher tableau with $\Lambda^{(j)}=\mu_j \in \mathbb{C}_{+}$ and $\beta^{(j)}=2\Real(\mu_j) \in \mathbb{R}_{+}$ in iteration step $j.$ 
With the condition $\Real({\mu_j})>0$ we make sure \eqref{eq:eigcond} is satisfied and all linear systems solves in \Cref{alg:quad} will have a unique solution.

For $s = 1,$ in step 3 of \Cref{alg:quad} the $n\times n$ linear system of equations
\begin{align}
    \left( I-\mu_j A \right)K_j = Ah_{j-1}
\end{align}
has to be solved to obtain $K_j$. This approach is summarized in \Cref{alg:quad_adi}.

\begin{algorithm}[!ht]
\caption{Low rank solution to \eqref{eq:lyap} via $1$-stage Runge-Kutta methods}
    \label{alg:quad_adi}
    \begin{algorithmic}[1] 
	    \Input $A\in \mathbb{R}^{n\times n}$ stable, $b\in \mathbb{R}^{n}$, parameters $\left\{ \mu_1,\dots,\mu_{N} \right\}\subset \mathbb{C}_{+}$
	    \Output  factor $Z\in \mathbb{C}^{n\times N}$ with $Z Z^{\mathsf{H}}\approx \mathcal{P}$
	    \State initialize $h_0=b$, $Z_0=[\ ]$
    \For {$j=1,\dots,N$}
    \State solve $(I-\mu_j A)K_j=Ah_{j-1}$ for $K_j \in \mathbb{C}^{n \times 1}$
    \State $\mathcal{H}_j=h_{j-1} + \mu_j K_j$
    \State update  $Z_j=[Z_{j-1}, \sqrt{2\Real(\mu_j)}\mathcal{H}_j]$
    \State $h_{j} = h_{j-1} + 2\Real(\mu_j)K_j$
    \EndFor
    \State $Z=Z_N$
    \end{algorithmic}
\end{algorithm}

\Cref{alg:quad_adi}  is equivalent to \Cref{alg:adi32} (with $m=1$) as shown in the following theorem. 
\begin{thm}
	\label{thm:algo_equiv}
Let $A \in  \mathbb{R}^{n\times n}$  be stable and $B=b\in \mathbb{R}^{n \times 1}$ (that is, $m=1$ in \Cref{alg:adi32}).
Let the parameters  in \Cref{alg:quad_adi}  and \Cref{alg:adi32} be chosen such that $\alpha_j=-\mu_j^{-1} \in \mathbb{C}_-$ for $j=1, \dots, N$.
Let $W_0=b$ and $W_j,$ $V_j,$ $j = 1, \ldots, N$ be determined by  \Cref{alg:adi32}. Let $h_0=b$ and $h_j,$ $\mathcal{H}_j,$ $j = 1, \ldots, N$ be determined by \Cref{alg:quad_adi}.
Then $W_j= c_jh_j$ and $\sqrt{-2\Real(\alpha_j)}V_j=d_j\sqrt{2\Real(\mu_j)}\mathcal{H}_j$ holds for some constants $c_j, d_j\in \mathbb{C}$ with $|c_j|=|d_j|=1$. Thus the approximation $Z_j Z_j^{\mathsf{H}} \approx \mathcal{P}$ to the Gramian is the same in every step of both algorithms.
\end{thm}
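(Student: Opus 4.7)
The plan is to use induction on $j$, with the base case $W_0=h_0=b$ giving $c_0=1$. The key algebraic observation is that $\alpha_j = -\mu_j^{-1}$ converts between the two linear systems: since
\[
I - \mu_j A = -\mu_j \bigl(A - \mu_j^{-1} I_n\bigr) = -\mu_j (A + \alpha_j I_n),
\]
we have $(I-\mu_j A)^{-1} = -\mu_j^{-1}(A+\alpha_j I_n)^{-1}$. Combining this with the push-through identity $(A+\alpha_j I_n)^{-1}A = I_n - \alpha_j(A+\alpha_j I_n)^{-1}$ lets us express both $K_j = -\mu_j^{-1}(A+\alpha_j I_n)^{-1}A h_{j-1}$ and $\mathcal{H}_j = h_{j-1} + \mu_j K_j$ entirely in terms of the ADI-style resolvent $(A+\alpha_j I_n)^{-1} h_{j-1}$. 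A short computation yields the clean form $\mathcal{H}_j = \alpha_j(A+\alpha_j I_n)^{-1} h_{j-1}$.

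For the inductive step, I would assume $W_{j-1}=c_{j-1}h_{j-1}$ with $|c_{j-1}|=1$. Then the ADI line 3 gives $V_j = c_{j-1}(A+\alpha_j I_n)^{-1}h_{j-1}$, so both algorithms add a column proportional to the same vector $(A+\alpha_j I_n)^{-1}h_{j-1}$. Comparing scalar factors and using the identity $-2\Real(\alpha_j) = 2\Real(\mu_j)/|\mu_j|^2$, one finds
\[
\sqrt{-2\Real(\alpha_j)}\,V_j \;=\; d_j\,\sqrt{2\Real(\mu_j)}\,\mathcal{H}_j
\qquad\text{with}\qquad d_j = -\frac{c_{j-1}\mu_j}{|\mu_j|},
\]
which has modulus one.

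Next I would verify the update relation for $W_j$ and $h_j$. Substituting the expressions for $V_j$ and $K_j$ into $W_j = W_{j-1}-2\Real(\alpha_j)V_j$ and $h_j = h_{j-1}+2\Real(\mu_j)K_j$ respectively, and using the push-through identity once more, both vectors reduce to a scalar multiple of $[I_n - 2\Real(\alpha_j)(A+\alpha_j I_n)^{-1}]h_{j-1}$. For $h_j$ the prefactor evaluates to $-\overline{\mu_j}/\mu_j$ after noting that $1 - 2\Real(\mu_j)/\mu_j = -\overline{\mu_j}/\mu_j$ and $2\Real(\mu_j)\alpha_j/\mu_j = (\overline{\mu_j}/\mu_j)\cdot 2\Real(\alpha_j)$. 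This yields $W_j = c_j h_j$ with $c_j = -c_{j-1}\mu_j/\overline{\mu_j}$, which again has modulus one, closing the induction. Equality of the Gramian approximations $Z_jZ_j^{\mathsf{H}}$ then follows since the column added at step $j$ differs only by a unit-modulus phase $d_j$.

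The main obstacle I expect is the bookkeeping of the complex scalar factors: one must simultaneously track two unit-modulus sequences $(c_j)$ and $(d_j)$ and verify that the ratios $\mu_j/\overline{\mu_j}$ arising from the $h$-update are consistent with those implicitly contained in $\sqrt{-2\Real(\alpha_j)}$ versus $\sqrt{2\Real(\mu_j)}$. Apart from this, every step is a direct resolvent manipulation with no hidden obstruction, so the proof should go through cleanly once these phases are handled.
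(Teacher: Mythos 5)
Your proposal is correct and follows essentially the same route as the paper's proof: induction on $j$ tracking the unit-modulus phases $c_j=-c_{j-1}\mu_j/\overline{\mu_j}$ and $d_j=-c_{j-1}\mu_j/|\mu_j|$, using the resolvent conversion $(I_n-\mu_jA)^{-1}=-\mu_j^{-1}(A+\alpha_jI_n)^{-1}$ (the paper encodes the same identity via $\mathcal{H}_j=(I_n-\mu_jA)^{-1}h_{j-1}=h_{j-1}+\mu_jK_j$) and the same scalar identities $1-2\Real(\mu_j^{-1})\mu_j=-\overline{\mu_j}/\mu_j$ and $-2\Real(\alpha_j)=2\Real(\mu_j)/|\mu_j|^2$. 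The concluding observation that $Z_jZ_j^{\mathsf H}$ is unaffected by unit-modulus column scalings also matches the paper.
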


\begin{proof}
	The initialization of the algorithms gives us $W_0=b=h_0$ and thus $c_0=1$.

	From \eqref{def:KAHH} with $s=1$ and $\omega_j\Lambda = \mu_j$ we find $\mathcal{H}_j = h_{j-1} + \mu_j A\mathcal{H}_j$ and thus
	\begin{align}
		\mathcal{H}_j = (I_n-\mu_jA)^{-1}h_{j-1}.
	\end{align}
	Making use of step 4 in \Cref{alg:quad_adi} we obtain
	\begin{align}\label{eq:ijk}
		(I_n-\mu_jA)^{-1}h_{j-1} = \mathcal{H}_j = h_{j-1} + \mu_jK_j.
	\end{align}

	Via induction we find from $W_{j-1}=c_{j-1}h_{h-1}$ for $W_j$ as in \Cref{alg:adi32}
	\begin{align}
		W_j &= W_{j-1} - 2\Real(\alpha_j)V_j\\
		&= W_{j-1} - 2\Real(\alpha_j)(A+\alpha_j I_n)^{-1}W_{j-1}\\
		&= c_{j-1}h_{j-1} - 2\Real(-\mu_j^{-1})(A-\mu_j^{-1}I_n)^{-1}c_{j-1}h_{j-1}\\
		&= c_{j-1}\left(h_{j-1} + 2\Real(\mu_j^{-1})(-\mu_j)(I_n - \mu_jA)^{-1}h_{j-1}\right)\\
		&\stackrel{\eqref{eq:ijk}}{=} c_{j-1}\left(h_{j-1} - 2\Real(\mu_j^{-1})\mu_j(h_{j-1}+\mu_jK_j)\right)\\
		&= c_{j-1}\left( (1-2\Real(\mu_j^{-1})\mu_j)h_{j-1} - 2\Real(\mu_j^{-1})\mu_j^2K_j)\right)\\
		&= -c_{j-1}\frac{\mu_j}{\overline\mu_j}\left( h_{j-1} + 2\Real(\mu_j)K_j\right)\\
		&= c_{j}h_j
	\end{align}
	with $1-2\Real(\mu_j^{-1})\mu_j=-\frac{\mu_j}{\overline\mu_j}$, $-2\Real(\mu_j^{-1})\mu_j^2=-\frac{\mu_j}{\overline\mu_j}2\Real(\mu_j)$ and $c_j\coloneqq-c_{j-1}\frac{\mu_j}{\overline\mu_j}$.

	Further we find for $V_j$ as in \Cref{alg:adi32}
	\begin{align}
		\sqrt{-2\Real(\alpha_j)}V_j &= \sqrt{-2\Real(\alpha_j)}(A+\alpha_jI_n)^{-1}W_{j-1}\\
		&= \sqrt{2\Real(\mu_j^{-1})}\mu_j(\mu_j A-I_n)^{-1}W_{j-1}\\
		&= -\sqrt{2\Real(\mu_j)|\mu_j|^{-2}}\mu_j(I_n-\mu_j A)^{-1}c_{j-1}h_{j-1}\\
		&\stackrel{\eqref{eq:ijk}}{=} -\sqrt{2\Real(\mu_j)}\frac{\mu_j}{|\mu_j|}c_{j-1}\mathcal{H}_j\\
		&= d_j\sqrt{2\Real(\mu_j)}\mathcal{H}_j
	\end{align}
	with $d_j\coloneqq -\frac{\mu_j}{|\mu_j|}c_{j-1}$. The observation $c_j\overline{c_j}=d_j\overline{d_j}=1$ concludes the proof.
\end{proof}

As \Cref{alg:quad_adi} is equivalent to \Cref{alg:adi32}, all properties of \Cref{alg:adi32} hold for \Cref{alg:quad_adi}
as well. In particular, complex valued iterates can be avoided if the set of parameters $\{\mu_j\}_{j=1}^N$ is proper. See \Cref{sec:adi}, \cite{penzl99},\cite[Chp. 4.3]{li2002}, \cite{Benner2012} or \cite[Chp. 4.1.4]{Kue16} for more.

In order to derive \Cref{alg:quad_adi} we have used different $1\times 1$-tableaus in each iteration step. In general,
larger tableaus allow for more accurate quadrature rules. Hence, their use might lead to an approximation of $\mathcal{P}$ which is more accurate than the one obtained by \Cref{alg:quad_adi}. However, as we show next, an algorithm using  larger DIRK tableaus 
can be reduced to \Cref{alg:quad_adi}.
In particular, $s$ steps with different $1\times 1$-tableaus $\Lambda^{(i)} = \mu_i$ and  $\beta^{(i)} = 2\Real(\mu_i)$, $i = 1, \ldots, s$ (as in Algorithm \ref{alg:quad_adi}, $N = s$) are equivalent to  one step with a particular $s$-stage DIRK tableau with $\Lambda$ and $\beta$ (as in Algorithm \ref{alg:quad}).
For $s$ steps of Algorithm \ref{alg:quad_adi} we have from step 6 
\begin{align}
	h_s &= h_{s-1} + 2\Real(\mu_s) K_{s}
	= h_0 + \sum_{i=1}^{s}2\Real(\mu_i)K_{i}\\
	&= h_0 + [K_1, \dots, K_s]\begin{bmatrix}2\Real(\mu_1)\\ \vdots \\ 2\Real(\mu_s)\end{bmatrix}\label{eq:beta}
\end{align}
with 
\begin{align}
	\label{eq:K}
	K_{i} &= Ah_{i-1}+\mu_iAK_{i}
	= Ah_0 + \sum_{j=1}^{i-1}2\Real(\mu_j)AK_{j} + \mu_iAK_{i}\in \mathbb{C}^{n\times 1}
\end{align}
for $i=1,\dots, s$. Merging \eqref{eq:K} for $i=1, \dots, s$ into one equation yields
\begin{align}
	\label{eq:KK}
	[K_1, \dots, K_s] &= [Ah_0,\dots, Ah_0] + [AK_1,\dots, AK_s]\begin{bmatrix}
		\mu_1& 2\Real(\mu_1) & \cdots & 2\Real(\mu_1)\\
		0 & \mu_2& \cdots& 2\Real(\mu_2)\\
		\vdots & \vdots& \ddots&\vdots\\
		0&0&\cdots&\mu_s
	\end{bmatrix}.
\end{align}
Hence \eqref{eq:beta} and \eqref{eq:K} are identical to step 6 and step 3 of \Cref{alg:quad} with $\omega_1=1$, where $[K_1, \dots, K_s]\in \mathbb{C}^{n\times s}$ from \eqref{eq:KK} corresponds to the $n\times s$ matrix $K_1$ in step 3 of \Cref{alg:quad}. Thus \Cref{alg:quad_adi} is equivalent to one step of \Cref{alg:quad} with $s=N$, time step size $\omega_1=1$ and the Butcher tableau \eqref{eq:cnf_tableau}.

In summary, Algorithm \ref{alg:quad_adi} is equivalent to the ADI method (\Cref{alg:adi32}) as well as
equivalent to one step of \Cref{alg:quad} for a DIRK method with $s=N.$
\begin{thm}
Let $A \in \mathbb{R}^{n \times n}$ be stable and $b\in \mathbb{R}^n.$
Let $\mu_j \in \mathbb{C}_+,\, j = 1, \ldots, s$ be given. Let $\Lambda \in \mathbb{C}^{s \times s}$ and $\beta\in \mathbb{R}_+^s$ be as in \eqref{eq:cnf_tableau}. One step of \Cref{alg:quad}  with $\Lambda,\, \beta$ is equivalent to $s$ steps
of Algorithm \ref{alg:quad_adi} with $N=s$ and equivalent to the ADI method (\Cref{alg:adi32}) with $\alpha_j = -\mu_j^{-1}.$
\end{thm}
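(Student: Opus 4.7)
The plan is to consolidate the three equivalences by stringing together results that have essentially already been established in the preceding discussion, and then verify that the concatenated low-rank factors in fact coincide.

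First I would dispose of the equivalence between the ADI method (Algorithm \ref{alg:adi32}) and $s$ steps of Algorithm \ref{alg:quad_adi} with $N=s$ and $\alpha_j=-\mu_j^{-1}$. This is nothing more than Theorem \ref{thm:algo_equiv}: the theorem gives $W_j=c_j h_j$ and $\sqrt{-2\Real(\alpha_j)}V_j=d_j\sqrt{2\Real(\mu_j)}\mathcal{H}_j$ with unimodular $c_j,d_j$, and since the update to the factor is $\sqrt{-2\Real(\alpha_j)}V_j$ (respectively $\sqrt{2\Real(\mu_j)}\mathcal{H}_j$) the Gramian approximations $Z_jZ_j^{\mathsf{H}}$ agree identically at every step $j=1,\ldots,s$. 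No further calculation is required here.

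Next I would handle the equivalence between one step of Algorithm \ref{alg:quad} (with $s$-stage DIRK tableau \eqref{eq:cnf_tableau} and $\omega_1=1$) and $s$ steps of Algorithm \ref{alg:quad_adi}. The calculation immediately preceding the theorem already does the bookkeeping: from step 6 of Algorithm \ref{alg:quad_adi}, iterating $s$ times gives
\[
h_s=h_0+[K_1,\ldots,K_s]\,\beta,
\]
where $\beta=(2\Real(\mu_1),\ldots,2\Real(\mu_s))^{\mathsf{T}}$, and merging the $s$ linear systems of step 3 into a single matrix equation yields
\[
[K_1,\ldots,K_s]=[Ah_0,\ldots,Ah_0]+A[K_1,\ldots,K_s]\,\Lambda^{\mathsf{T}},
\]
with $\Lambda$ precisely the DIRK matrix in \eqref{eq:cnf_tableau}. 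These are steps 3 and 6 of one step of Algorithm \ref{alg:quad} with $\omega_1=1$ and $K_1:=[K_1,\ldots,K_s]$, so $h_s$ and the stacked $K$-matrix agree.

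It remains to verify that the low-rank increments agree. For this I would express $\mathcal{H}_i$ from Algorithm \ref{alg:quad_adi} as $\mathcal{H}_i=h_{i-1}+\mu_i K_i$ and, by expanding $h_{i-1}=h_0+\sum_{j<i}2\Real(\mu_j)K_j$, obtain
\[
[\mathcal{H}_1,\ldots,\mathcal{H}_s]=[h_0,\ldots,h_0]+[K_1,\ldots,K_s]\,\Lambda^{\mathsf{T}},
\]
which matches step 4 of Algorithm \ref{alg:quad} exactly. Consequently the block appended to $Z$ in step 5 of Algorithm \ref{alg:quad}, namely $[\mathcal{H}_1,\ldots,\mathcal{H}_s]\diag(\beta)^{1/2}=[\sqrt{2\Real(\mu_1)}\mathcal{H}_1,\ldots,\sqrt{2\Real(\mu_s)}\mathcal{H}_s]$, is the concatenation of the $s$ column blocks appended during $s$ steps of Algorithm \ref{alg:quad_adi}. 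Combining the two equivalences finishes the proof.

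The only piece requiring genuine care — and the place I would expect the bookkeeping to slip — is matching the off-diagonal entries of $\Lambda$ in \eqref{eq:cnf_tableau} to the partial sums of $2\Real(\mu_j)$ that accumulate in $h_{i-1}$; once that is seen the identification is transparent, and no new ingredient beyond Theorem \ref{thm:algo_equiv} and the calculations in \eqref{eq:beta}–\eqref{eq:KK} is needed.
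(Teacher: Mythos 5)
Your proposal is correct and follows essentially the same route as the paper: cite Theorem \ref{thm:algo_equiv} for the ADI equivalence, and reuse the bookkeeping of \eqref{eq:beta}--\eqref{eq:KK} to merge $s$ scalar steps of Algorithm \ref{alg:quad_adi} into one block step of Algorithm \ref{alg:quad}. You add one useful detail the paper leaves implicit, namely the explicit check that $[\mathcal{H}_1,\ldots,\mathcal{H}_s]=[h_0,\ldots,h_0]+[K_1,\ldots,K_s]\Lambda^{\mathsf{T}}$ reproduces step 4 of Algorithm \ref{alg:quad}, so the appended low-rank blocks (and not just $h_s$ and the $K$-matrix) agree; without that check the equality of the factors $Z$ does not immediately follow from matching steps 3 and 6 alone. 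No new ingredient beyond Theorem \ref{thm:algo_equiv} and those equations is required, as you say.
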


Please note that due to the observation \eqref{eq:mimo_gramian_sum}  we have considered only a vector $b \in \mathbb{R}^{n}$ in our discussion and, in particular, in \Cref{alg:quad_adi}. However, \Cref{alg:quad_adi} can easily be adapted to a problem with $B\in \mathbb{R}^{n\times m}.$  The vector $b$ in \Cref{alg:quad_adi} just has to be replaced by  a matrix $B\in \mathbb{R}^{n\times m}$.

\subsection{A multiplicative update formula for $h_j$}\label{subsec_multupdate}
We now focus on the iterate $h_j = h_{j-1} + \omega_j K_j\beta$ as computed in step 6 of Algorithm \ref{alg:quad} from a Butcher tableau with $\Lambda \in \mathbb{C}^{s \times s}$ and $\beta\in \mathbb{C}^s,$
where $K_j$ is as in \eqref{def:KAH}.
As before, we will assume that $\omega_j = 1$ by moving the time step size into the Butcher tableau such that instead of one $\Lambda$ and $\beta$ we are now using $N$ different $\Lambda^{(j)} \in \mathbb{C}^{s \times s}$ and $\beta^{(j)}\in \mathbb{C}^s$. Thus we consider
$h_j = h_{j-1}+K_j\beta^{(j)}.$
Our goal is to rewrite the update rule as a multiplicative one; $h_j = M_j h_{j-1}.$

With \eqref{eq:Kveckron} we find
\begin{align}
	K_j\beta^{(j)} &= ((\beta^{(j)})^{\mathsf{T}}\otimes I_n)\vect(K_j)\\
	&= ((\beta^{(j)})^{\mathsf{T}}\otimes I_n)(I_{ns}-\Lambda^{(j)}\otimes A)^{-1}(I_s\otimes A)(\mathds{1}_s\otimes I_n)h_{j-1}\\
	&= ((\beta^{(j)})^{\mathsf{T}}\otimes I_n\left\{ (I_s^{-1}\otimes A^{-1})(I_{ns}-\Lambda^{(j)}\otimes A) \right\}^{-1}(\mathds{1}_s\otimes I_n)h_{j-1}\\
	&= ((\beta^{(j)})^{\mathsf{T}}\otimes I_n)\left\{ (I_s^{-1}\otimes A^{-1})-\Lambda^{(j)}\otimes I_n \right\}^{-1}(\mathds{1}_s\otimes I_n)h_{j-1}\\
	&= ((\beta^{(j)})^{\mathsf{T}}\otimes I_n)\left\{ (I_{ns}-\Lambda^{(j)}\otimes A)(I_s^{-1}\otimes A^{-1}) \right\}^{-1}(\mathds{1}_s\otimes I_n)h_{j-1}\\
	&= ((\beta^{(j)})^{\mathsf{T}}\otimes I_n)(I_s\otimes A)( I_{ns}-\Lambda^{(j)}\otimes A )^{-1}(\mathds{1}_s\otimes I_n)h_{j-1}\\
	&= ((\beta^{(j)})^{\mathsf{T}}\otimes A)(I_{ns}-\Lambda^{(j)}\otimes A)^{-1}(\mathds{1}_s\otimes I_n)h_{j-1}.
\end{align}

With \eqref{eq:kronswap} we have $\Lambda^{(j)}\otimes A = Q(A \otimes \Lambda^{(j)})Q^{\mathsf{T}}$ 
for a perfect shuffle permutation matrix $Q \in \mathbb{R}^{ns \times ns}.$ Moreover, as $Q$ is orthogonal, it holds that
$I_n \otimes I_s = I _{ns} = QQ^\mathsf{T} = Q(I_s \otimes I_n)Q^{\mathsf{T}}.$ Hence,
\begin{align}
	K_j\beta^{(j)} &= ((\beta^{(j)})^{\mathsf{T}}\otimes A)\left\{ Q( I_n\otimes I_s - A \otimes \Lambda^{(j)})Q^{\mathsf{T}} \right\}^{-1}(\mathds{1}_s\otimes I_n)h_{j-1}\\
	&= {I_{n}}((\beta^{(j)})^{\mathsf{T}}\otimes A)Q\left\{ I_n\otimes I_s-A \otimes \Lambda^{(j)} \right\}^{-1} Q^{\mathsf{T}} (\mathds{1}_s\otimes I_n){I_n} h_{j-1}\\
	&= (A\otimes (\beta^{(j)})^{\mathsf{T}})\left( I_{ns}-A \otimes \Lambda^{(j)} \right)^{-1}(I_n\otimes \mathds{1}_s)h_{j-1}
\end{align}
as the perfect shuffle matrix for a vector is the identity.

Thus the iterate $h_j$ is obtained from $h_{j-1}$ via
\begin{align}
	h_j = M_j h_{j-1}
	\label{eq:hMh}
\end{align}
with the iteration matrix
\begin{align}
	\label{eq:Miter}
	M_j &\coloneqq M_j(A) = I_n + (A\otimes (\beta^{(j)})^{\mathsf{T}})\left( I_{ns}-A \otimes \Lambda^{(j)} \right)^{-1}(I_n\otimes \mathds{1}_s).
\end{align}
The matrix valued function $M(z) = I_n + (z\otimes \beta^{\mathsf{T}})\left( I_{ns}-z \otimes \Lambda \right)^{-1}(I_n\otimes \mathds{1}_s)$ can be viewed as a generalization of the stability function $R(z)=1+z\beta^{\mathsf{T}}(I-z\Lambda)^{-1}\mathds{1}_s$ of the corresponding Runge-Kutta method.

For a diagonalizable system matrix $A=VDV^{-1}$ with the matrix of right eigenvectors $V$ and the diagonal matrix $D=\diag(\lambda_1, \dots, \lambda_n)$ containing the eigenvalues of $A$ on the diagonal, the iteration matrix $M_j$ from \eqref{eq:Miter} simplifies considerably. We define the stability function $R_{(j)}(z)=1+z(\beta^{(j)})^{\mathsf{T}}(I-z\Lambda^{(j)})^{-1}\mathds{1}_s$ for a Runge-Kutta method with $\Lambda^{(j)}$ and $\beta^{(j)}$ and see
\begin{align}
	M_j &= I_n + (A\otimes (\beta^{(j)})^{\mathsf{T}})\left( I_{ns}-(A \otimes \Lambda^{(j)}) \right)^{-1}(I_n\otimes \mathds{1}_s) \nonumber\\
	&= I_n + V(D\otimes (\beta^{(j)})^{\mathsf{T}})(V^{-1}\otimes I_s) \nonumber\\
	&\qquad \cdot\left\{ I_n\otimes I_s-(V\otimes I_s)(D \otimes \Lambda^{(j)})(V^{-1}\otimes I_s) \right\}^{-1}(I_n\otimes \mathds{1}_s) \nonumber\\
	&= I_n + V(D\otimes (\beta^{(j)})^{\mathsf{T}})\left( I_n\otimes I_s-(D \otimes \Lambda^{(j)})\right)^{-1}(I_n\otimes \mathds{1}_s)V^{-1} \nonumber\\
	&= V\left(I_n + \left[\begin{smallmatrix}\lambda_1(\beta^{(j)})^{\mathsf{T}}& &\\&\ddots &\\ &&\lambda_n(\beta^{(j)})^{\mathsf{T}} \end{smallmatrix}\right]
	 \left[\begin{smallmatrix}I_s-\lambda_1\Lambda^{(j)}& &\\&\ddots &\\ &&I_s-\lambda_n\Lambda^{(j)} \end{smallmatrix}\right]^{-1}\left[\begin{smallmatrix}\mathds{1}_s& &\\&\ddots &\\ &&\mathds{1}_s \end{smallmatrix}\right]\right)V^{-1} \nonumber \\
	&= V\left[\begin{smallmatrix}1+\lambda_1(\beta^{(j)})^{\mathsf{T}}(I_s-\lambda_1\Lambda^{(j)})^{-1}\mathds{1}_s&&\\&\ddots &\\ &&1+\lambda_n(\beta^{(j)})^{\mathsf{T}}(I_s-\lambda_n\Lambda^{(j)})^{-1}\mathds{1}_s\end{smallmatrix}\right]V^{-1} \nonumber\\
	&= V\begin{bmatrix}R_{(j)}(\lambda_1)& &\\&\ddots &\\ &&R_{(j)}(\lambda_n) \end{bmatrix}V^{-1},
	\label{eq:iter_mat_diag}
\end{align}
i.e. the iteration matrix is determined by the stability function $R_{(j)}(z)$ corresponding to the Butcher tableau with $\Lambda^{(j)}$, $\beta^{(j)}$ as well as by the eigenvalues and eigenvectors of the system matrix $A$. For non-diagonalizable system matrices we have no explicit formula in terms of the stability function. However, $M_j$---a composition of continuous functions---depends continuously on $A$ and the diagonalizable matrices are dense in the set of all matrices. Thus $M_j$ is (implicitly) determined by the stability function of the utilized Runge-Kutta method for non-diagonalizable matrices $A$, too.

Tableaus with the same stability function yield the same approximation $h_j$ as $h_j = M_j h_{j-1}$ holds
with $M_j$ as in \eqref{eq:iter_mat_diag} for each of the different methods.
In case the tableaus satisfy \eqref{eq:invariant_complex}, then
\begin{align}
	AZ_jZ_j^{\mathsf{H}} + Z_jZ_j^{\mathsf{H}}A^{\mathsf{T}} + (bb^{\mathsf{T}}-h_jh_j^{\mathsf{H}})=0
	\label{eq:P_det_by_h}
\end{align}
holds, and $Z_jZ_j^{\mathsf{H}}$ (but not $Z_j$) is determined by $h_j$ via \eqref{eq:P_det_by_h}. Thus, all tableaus
with the same stability function yield the same approximation $P_j = Z_jZ_j^{\mathsf{H}}$ (as long as the tableaus satisfy \eqref{eq:eigcond}, \eqref{eq:famous_m} and $\beta\in\mathbb{R}^s_+$).

\subsection{Runge-Kutta methods satisfying \eqref{eq:famous_m}} \label{sec36}
We now investigate when two $s$-stage Runge-Kutta methods have the same stability function. Assume that $\Lambda$ and $\beta$ are as in \Cref{thm:invariant_approx} and satisfy \eqref{eq:famous_m}, $0=\diag(\beta)\overline{\Lambda} + \Lambda^{\mathsf{T}}\diag(\beta)-\beta\beta^{\mathsf{T}}.$ Then
\[
	-\overline{\Lambda} =\diag(\beta)^{-1}(\Lambda^{\mathsf{T}} - \beta\mathds{1}^{\mathsf{T}}_s)\diag(\beta)
\]
as  $\beta_j > 0,$ $j = 1, \ldots, s$, proving that the matrices $-\overline{\Lambda}$ and $\Lambda - \mathds{1}_s\beta^{\mathsf{T}}$ are similar, i.e. 
\begin{align}\label{eq:sim_tabs}
-\overline{\Lambda} \quad \sim \quad \Lambda - \mathds{1}_s\beta^{\mathsf{T}}.
\end{align}
Let $\Lambda$ have eigenvalues $\mu_1, \dots, \mu_s$. Then with the determinant based characterization of the stability function \eqref{eq:stab} we find that
	\begin{align}
		R(z)&=\frac{\det(I+z(\mathds{1}_s\beta^{\mathsf{T}}-\Lambda))}{\det(I-z\Lambda)} 
= \frac{\det(I+z\overline{\Lambda}))}{\det(I-z\Lambda)} \nonumber \\
		\label{eq:char_poly_zinv}
		&= \frac{(1+\overline{\mu_1}z)\cdots(1+\overline{\mu_s}z)}{(1-\mu_1z)\cdots(1-\mu_sz)}
	\end{align}
	holds. This is just the stability function of a DIRK method as given in  \eqref{eq:cnf_tableau}. Thus  any
method based on a Butcher tableau with $\Lambda,\, \beta$ such that $\Lambda$ has the eigenvalues $\mu_1, \dots, \mu_s$ and $\beta\in\mathbb{R}^s_+$ satisfies \eqref{eq:famous_m} is equivalent to a DIRK method \eqref{eq:cnf_tableau}. Note that obviously the order of the parameters $\mu_i$ is irrelevant. We summarize our findings in the following theorem.
\begin{thm}\label{thm:dim1_tableaus}
Let $A \in \mathbb{R}^{n \times n}$ be stable and $b\in \mathbb{R}^n.$
Let a Butcher tableau \eqref{eq:butcher} with $\Lambda \in \mathbb{C}^{s \times s}$ and $\beta \in \mathbb{R}^s_+$ satisfying \eqref{eq:famous_m} be given. 
Let $\Lambda$ have eigenvalues $\mu_1, \ldots, \mu_s\in \mathbb{C}_{+}$, so \eqref{eq:eigcond} is guaranteed. Moreover, \eqref{eq:sim_tabs} holds. Then the method based on this tableau is equivalent to the DIRK method given in \eqref{eq:cnf_tableau} and therefore equivalent to $s$ steps with the $1$-stage tableaus $\Lambda^{(j)}=\mu_j$ and $\beta^{(j)}=2\Real(\mu_j)$ for $j=1,\ldots,s$, i.e., equivalent to $s$ steps of \Cref{alg:quad_adi}.
\end{thm}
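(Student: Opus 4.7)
The plan is to assemble the three claims of the theorem from material already developed in Sections \ref{subsec_multupdate}, \ref{sec36} and \ref{sec34}; no genuinely new computation is needed, but it is important to state precisely what kind of equivalence is being invoked at each step.

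First, I would verify \eqref{eq:eigcond}. Since $A$ is stable, every eigenvalue $\lambda_q$ of $A$ satisfies $\Real(\lambda_q)<0$, so $1/(\omega_j\lambda_q)$ has negative real part. By hypothesis every eigenvalue $\mu_p$ of $\Lambda$ lies in $\mathbb{C}_+$, so the condition $\mu_p\neq 1/(\omega_j\lambda_q)$ is automatic for all $p,q$. This guarantees that the linear system in step~3 of \Cref{alg:quad} has a unique solution. Next, \eqref{eq:sim_tabs} is an immediate rearrangement of \eqref{eq:famous_m}: since $\beta\in\mathbb{R}_+^s$, $\diag(\beta)$ is invertible, and the equation $\diag(\beta)\overline{\Lambda}+\Lambda^{\mathsf{T}}\diag(\beta)=\beta\beta^{\mathsf{T}}$ yields $-\overline{\Lambda}=\diag(\beta)^{-1}(\Lambda^{\mathsf{T}}-\beta\mathds{1}_s^{\mathsf{T}})\diag(\beta)$, which is exactly the similarity $-\overline{\Lambda}\sim\Lambda-\mathds{1}_s\beta^{\mathsf{T}}$.

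The main substantive step is the equivalence with the DIRK method \eqref{eq:cnf_tableau}. I would use the determinantal form \eqref{eq:stab} of the stability function, together with \eqref{eq:sim_tabs}, to compute
\begin{align*}
R(z)\;=\;\frac{\det(I_s-z(\Lambda-\mathds{1}_s\beta^{\mathsf{T}}))}{\det(I_s-z\Lambda)}\;=\;\frac{\det(I_s+z\overline{\Lambda})}{\det(I_s-z\Lambda)}\;=\;\prod_{i=1}^{s}\frac{1+\overline{\mu_i}z}{1-\mu_iz},
\end{align*}
so $R$ depends only on the multiset $\{\mu_1,\ldots,\mu_s\}$. The DIRK tableau \eqref{eq:cnf_tableau} is lower triangular with the same diagonal $\mu_1,\ldots,\mu_s$, satisfies \eqref{eq:famous_m} (as noted below \eqref{eq:cnf_tableau}), has $\beta\in\mathbb{R}_+^s$, and satisfies \eqref{eq:eigcond} by the same argument as above, so it has precisely the same stability function. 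By the observation at the end of \Cref{subsec_multupdate}, two tableaus both satisfying \eqref{eq:famous_m}, \eqref{eq:eigcond} and having $\beta\in\mathbb{R}_+^s$ and the same stability function produce identical iterates $h_j$ via \eqref{eq:hMh}--\eqref{eq:Miter} and identical $P_j=Z_jZ_j^{\mathsf{H}}$ via \eqref{eq:P_det_by_h}. This is the equivalence asserted.

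Finally, the equivalence of one step of \Cref{alg:quad} with the DIRK tableau \eqref{eq:cnf_tableau} to $s$ steps of \Cref{alg:quad_adi} using the $1$-stage tableaus $\Lambda^{(j)}=\mu_j$, $\beta^{(j)}=2\Real(\mu_j)$ was already derived in \Cref{sec34} via the manipulations \eqref{eq:beta}--\eqref{eq:KK}, so I would simply cite that derivation and chain it with the previous step. The main potential obstacle is purely notational: the $Z_j$ factors produced by the two methods are not literally equal (they differ in column arrangement and unimodular scaling, just as in \Cref{thm:algo_equiv}), so I would be careful to formulate the conclusion in terms of the equivalence of $h_j$ and of the Gramian approximation $P_j=Z_jZ_j^{\mathsf{H}}$ rather than of $Z_j$ itself.
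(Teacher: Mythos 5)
Your proposal is correct and follows essentially the same route as the paper: the similarity \eqref{eq:sim_tabs} from \eqref{eq:famous_m}, the determinant form \eqref{eq:stab} giving the stability function \eqref{eq:char_poly_zinv} that depends only on the eigenvalues of $\Lambda$, the observation closing Section \ref{subsec_multupdate} that such tableaus yield the same $h_j$ and $P_j=Z_jZ_j^{\mathsf{H}}$, and the reduction \eqref{eq:beta}--\eqref{eq:KK} of the DIRK step to $s$ steps of \Cref{alg:quad_adi}. Your added care about \eqref{eq:eigcond} and about equivalence meaning equality of $h_j$ and $P_j$ rather than of $Z_j$ itself is consistent with the paper's intent.
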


Let two different Butcher tableaus with $\Lambda\in \mathbb{C}^{s \times s},\, \beta\in\mathbb{R}^s_+$ and $\widetilde{\Lambda}\in \mathbb{C}^{s \times s},\, \widetilde{\beta}\in\mathbb{R}^s_+$ be given such that $\Lambda$ and $\widetilde{\Lambda}$ have the same eigenvalues, $\sigma(\Lambda) = \sigma(\widetilde{\Lambda}).$ Further assume that $\Lambda,\, \beta$ are chosen such that \eqref{eq:famous_m}, \eqref{eq:eigcond} and \eqref{eq:sim_tabs} hold.
Please note that Theorem \ref{thm:dim1_tableaus} does not imply that the method based on the Butcher tableau with $\widetilde{\Lambda},\, \widetilde{\beta}$ is equivalent to the one based on $\Lambda,\, \beta.$ Only in case $\widetilde{\Lambda},\, \widetilde{\beta}$ also satisfies \eqref{eq:famous_m}, \eqref{eq:eigcond} and \eqref{eq:sim_tabs}, the two methods are equivalent. In the following we demonstrate this statement with particular tableaus.

Consider $2$ steps with the method based on the $1$-stage tableaus $\Lambda^{(1)}=\mu,\, \beta^{(1)}=2\Real(\mu)$ and $\Lambda^{(2)}=\overline{\mu},\, \beta^{(2)}=2\Real(\mu)$ where $\mu\in\mathbb{C}$ is chosen such that $\Real(\mu)\neq 0$ and \eqref{eq:eigcond} holds. This is equivalent to one step of the method based on the DIRK method with
\[
\Lambda = \begin{bmatrix} \mu & 0\\ 2\Real(\mu) & \overline{\mu}\end{bmatrix} \in \mathbb{C}^{2 \times 2},\, \beta = \begin{bmatrix} 2\Real(\mu) \\
2\Real(\mu)\end{bmatrix} \in \mathbb{R}^2.
\]
The matrix $\Lambda$ is similar to 
\[
\widetilde{\Lambda} = \begin{bmatrix} \overline{\mu} & 0\\ 0 & \mu\end{bmatrix} =
V^{-1}\Lambda V\in \mathbb{C}^{2\times 2}
\]
as well as to
\begin{equation}
\widehat{\Lambda} = \begin{bmatrix}\Real(\mu) & -\Imag(\mu)\\ \Imag(\mu)& \Real(\mu)\end{bmatrix}\in \mathbb{R}^{2\times 2}
\label{eq:lambdahat}
\end{equation}
as $\widetilde{\Lambda} = V^{-1}\Lambda V$ and $\widehat{\Lambda} = W^{-1}\Lambda W$ with
$V = \left[\begin{smallmatrix} 0 & 1 \\ \imath z & -\imath z\end{smallmatrix}\right],$
$W = \frac{1}{\sqrt{2}}\left[\begin{smallmatrix} 1 & \imath\\ 0 &2z\end{smallmatrix}\right]$ and $z = \frac{\Real(\mu)}{\Imag(\mu)}.$
A quick check reveals that for $\delta\in\mathbb{C}^2$
\[0=\diag(\delta)\overline{\widetilde{\Lambda}} + \widetilde{\Lambda}^{\mathsf{T}}\diag(\delta)-\delta\delta^{\mathsf{T}}
\]
as well as
\[0=\diag(\delta)\overline{\widehat{\Lambda}} + \widehat{\Lambda}^{\mathsf{T}}\diag(\delta)-\delta\delta^{\mathsf{T}}
\]
is satisfied only if either $\delta_1 =0$ or $\delta_2 =0.$ Thus no method based on a Butcher tableau with $\widetilde{\Lambda}$ or $\widehat{\Lambda}$ is equivalent to the method based on the Butcher tableau with $\Lambda,\, \beta.$ 

Another matrix similar to $\Lambda$ is given by
\[\label{eq:breveL}
	\breve{\Lambda}=\begin{bmatrix}
		\Real(\mu)&\Real(\mu)+\varphi |\mu|\\
		\Real(\mu) -\varphi |\mu|&\Real(\mu)
		\end{bmatrix}\in \mathbb{R}^{2\times 2}
\]
with $\varphi=\sgn(\Imag(\mu))=\frac{\Imag(\mu)}{|\Imag(\mu)|}.$ $\breve{\Lambda}$ is similar to $\widehat{\Lambda}$ as $\widehat{\Lambda} = S^{-1}\breve{\Lambda}S$ with
\begin{align}\label{eq:S}
	S &= \underbrace{\frac{1}{\sqrt{2}}\begin{bmatrix}1 & 1\\ 1 &-1\end{bmatrix}}_{\eqqcolon Q} 
\underbrace{\sqrt{2} \begin{bmatrix}1&z \\ 0& \sqrt{z^2+1}\end{bmatrix}}_{= L^\mathsf{T}}
\end{align}
and thus similar to $\Lambda$ ($\breve{\Lambda} = T^{-1}\Lambda T$ with $ T = WS^{-1} = \frac{1}{2\sqrt{2}\sqrt{z^2+1}}
\left[\begin{smallmatrix} 
	\sqrt{z^2+1}-z +\imath & \sqrt{z^2+1}+z-\imath\\ 2z & -2z\end{smallmatrix}\right]$). Please note that $L$ in \eqref{eq:S} is essentially  the same as $L$ in \eqref{eq:L}. 
Choosing
\[
	\label{eq:breveB}
\breve{\beta} = \begin{bmatrix} 2\Real(\mu)\\2\Real(\mu)\end{bmatrix}
\]
we find that  $\breve{\Lambda},\, \breve{\beta}$ satisfy \eqref{eq:famous_m}. Due to the choice of $\mu,$  \eqref{eq:eigcond} and \eqref{eq:sim_tabs} also hold. Thus the methods based on $\Lambda,\, \beta$ and on $\breve{\Lambda},\, \breve{\beta}$ are equivalent.

Finally, please note, that given $\Lambda,\, \beta$ satisfying \eqref{eq:famous_m} and $\check{\Lambda} = U^{-1}\Lambda U$ with a regular matrix $U \in \mathbb{C}^{2 \times 2}$ the condition
$0=\diag(\beta)\overline{\Lambda} + \Lambda^{\mathsf{T}}\diag(\beta)-\beta\beta^{\mathsf{T}}$
can not be used to determine whether or not $\breve{\beta}$ exists such that \eqref{eq:famous_m} is satisfied for $\breve{\Lambda}, \breve{\beta}.$ We have
\begin{align}
0&=\left(U^\mathsf{T}\diag(\beta)\overline{U}\right) \left(\overline{U}^{-1}\overline{\Lambda}\overline{U}\right) + \left(U^\mathsf{T}\Lambda^{\mathsf{T}}U^\mathsf{-T}\right)
\left(U^\mathsf{T}\diag(\beta)\overline{U}\right)-U^\mathsf{T}\beta\beta^{\mathsf{T}}\overline{U}\\ &=
Y \overline{\check{\Lambda}} +  \check{\Lambda}^\mathsf{T} Y -U^\mathsf{T}\beta\beta^{\mathsf{T}}\overline{U}
\end{align}
where $ Y = U^\mathsf{T}\diag(\beta)\overline{U}.$ Unfortunately, in general $Y$ will not be diagonal (this is readily checked for
$U = V$ and $\check{\Lambda} = \widetilde{\Lambda}$ or $U = T$ and $\check{\Lambda} = \breve{\Lambda}$). Moreover,
in general, $U^\mathsf{T}\beta \neq \beta^\mathsf{T}\overline{U}.$ Thus, considering the above transformation of 
 \eqref{eq:famous_m} does not help in order to determine an appropriate $\check{\beta}$ such that $\check{\Lambda}, \check{\beta}$ satisfy  \eqref{eq:famous_m}. 
One needs to check $0=\diag(\check{\beta})\overline{\check{\Lambda}} + \check{\Lambda}^{\mathsf{T}}\diag(\check{\beta})-\check{\beta}\check{\beta}^{\mathsf{T}}$ directly.

\subsection{Choice of parameters in \Cref{alg:quad_adi}}
\label{sec:geometric}
Here we discuss the choice of the parameters $\mu_1, \dots, \mu_N$ to obtain a good approximation to the Gramian. We know that the Lyapunov residual for iterates which fulfill the invariant \eqref{eq:invariant_complex} is given by $h_Nh_N^{\mathsf{H}}$. Hence for a small residual the norm $\|h_N\|$ has to be small.

Consider \Cref{alg:quad_adi}, that is, $R_{(j)}(z) = \frac{1+\overline{\mu_j}z}{1-\mu_jz}$.
Then with \eqref{eq:hMh} and \eqref{eq:iter_mat_diag} we find
\begin{align}
	h_N &= \left(\prod_{j=1}^{N}M_j\right) b \\
	& = V\left(\prod_{j=1}^N\begin{bmatrix}R_{(j)}(\lambda_1)& &\\&\ddots &\\ &&R_{(j)}(\lambda_n)\end{bmatrix}\right) V^{-1}b\\
	& = V\left(\prod_{j=1}^N\begin{bmatrix}\frac{1+\overline{\mu_j}\lambda_1}{1-\mu_j\lambda_1}& &\\&\ddots &\\ &&\frac{1+\overline{\mu_j}\lambda_n}{1-\mu_j\lambda_n}\end{bmatrix}\right) V^{-1}b.
\end{align}
Taking the 2-norm this implies
\begin{align}
	\| h_N \| \leq \|V\| \max_{i=1, \ldots, n}\left( \prod_{j=1}^{N} \frac{|1+\overline{\mu_j}\lambda_i|}{|1-\mu_j\lambda_i|} \right) \|V^{-1}\| \|b\|.
\end{align}
To minimize this error bound the parameters $\mu_j$ have to be chosen such that the problem
\begin{align}
	\min_{\mu_1, \ldots, \mu_N}\max_{i=1, \ldots, n}\prod_{j=1}^{N} \frac{|1+\overline{\mu_j}\lambda_i|}{|1-\mu_j\lambda_i|} 
\end{align}
is solved. It is equivalent to the rational min-max problem \cite[Sec.\,2.2]{benkuebt2013}. For $N=n$ the choice $\mu_j=-\overline{\lambda_j}^{-1}$, $j=1, \dots, N$ yields $h_N=0$ and thus $P_N=\mathcal{P}$. However,
in that case the final iterate $Z_N$ of \Cref{alg:quad_adi}  is of size $n \times n.$ Thus, this would not be a low rank approximation. Still this suggests that the parameters should somehow approximate the negative conjugated inverse of the eigenvalues $\lambda_i$ of $A$, i.e. $\mu_j\approx -\overline{\lambda_i}^{-1}$.

As we consider a stable system matrix $A$, all eigenvalues $\lambda_i$ have negative real part. Therefore, to obtain a factor with modulus smaller one, i.e. $\frac{|1+\overline{\mu_j}\lambda_i|}{|1-\mu_j\lambda_i|}<1$, the parameters $\mu_j$ must have positive real parts.

As $\omega_j=1$ in \Cref{alg:quad_adi} and due to \eqref{eq:eigcond}, the choice of $\mu_j$ with positive real parts guarantees the uniqueness of the solution $K$ of \eqref{eq:Kvec}. As \Cref{alg:quad_adi} is equivalent to the ADI iteration, we refer the reader to \cite{benkuebt2013} for a discussion of different strategies for choosing the parameters.

\subsection{Realification}\label{sec37}
\label{sec:realification}
We show that when using a proper set of shifts in \Cref{alg:quad_adi} we can avoid complex arithmetic in \Cref{alg:quad_adi} forced by the use of complex parameters $\mu_j.$ We follow the idea discussed in \Cref{sec:adi} and combine two (complex) iteration steps of the algorithm to a real one. Thereto \Cref{thm:dim1_tableaus} and the discussion thereafter is utilized. That is instead of two steps with the $1$-stage tableaus $\Lambda^{(j)}=\mu,\, \beta^{(j)}=2\Real(\mu)$ and $\Lambda^{(j+1)}=\overline\mu,\, \beta^{(j+1)}=2\Real(\mu)$ for $\mu\in \mathbb{C}_{+}$ only one step of \Cref{alg:quad} is performed with a suitable real $2$-stage Butcher tableau with $\Lambda,\, \beta$  such that $\sigma(\Lambda) = \{\mu,\,\overline\mu\}$ and meeting the requirements of \Cref{thm:dim1_tableaus}. As discussed in \Cref{sec36}  the real $2$-stage tableau with $\breve{\Lambda}$ and $\breve{\beta}$ as in  \eqref{eq:breveL} and \eqref{eq:breveB} can be used here. We will do so in the following discussion.

In order to explain our realification idea, we will make use of the iteration based on 
\begin{equation}\label{eq:xx}
\mathcal{H}_j = [h_{j-1}, h_{j-1}] + A\mathcal{H}_j\breve{\Lambda}^{\mathsf{T}}
\end{equation}
as in \eqref{def:KAHH}.
The ideas transfer to the iteration based on $K_j$ from \eqref{def:KAH} which is used in \Cref{alg:quad}
easily. We further assume that all previous iterates are real valued, in particular, $h_{j-1}\in \mathbb{R}^{n}$ and $P_{j-1} \in \mathbb{R}^{n \times n}.$

The iterate $\mathcal{H}_j=[\mathfrak{h}_1^{(j)},\mathfrak{h}_2^{(j)}]$  can be obtained from \eqref{eq:xx} via vectorization
	\begin{align}\label{eq:2nreal}
		\begin{bmatrix}
		I_n-\Real(\mu) A & -(\Real(\mu)+\varphi|\mu |) A\\
		-(\Real(\mu)-\varphi|\mu|)A & I_n-\Real(\mu) A
	\end{bmatrix}\begin{bmatrix}\mathfrak{h}_1^{(j)}\\\mathfrak{h}_2^{(j)}\end{bmatrix} &= \begin{bmatrix}h_{j-1}\\ h_{j-1}\end{bmatrix}.
	\end{align}
Thus, instead of solving two complex $n \times n$ systems in \Cref{alg:quad_adi} for the steps with $\Lambda^{(j)}=\mu$ and $\Lambda^{(j+1)}=\overline\mu$ it is possible to solve one real $2n \times 2n$ system \eqref{eq:2nreal}. As the system
matrix as well as the right hand side is real,  this gives $\mathcal{H}_j \in \mathbb{R}^{n\times 2}.$
	This realification approach is comparable to the approaches $\mathsf{M2^\ast}$ and $\mathsf{M4^\ast}$ presented in \cite[Chp. 4.1.5]{Kue16}. However, it was shown in \cite{Kue16} that these approaches based on $2n \times 2n$ real linear systems are mostly outperformed by the approach $\mathsf{M4}$ which is based on the solution of one complex $n \times n$ system of linear equations. We summarized this approach in \Cref{sec:adi}.

We continue our discussion with a modification of our approach such that instead of the $2n \times 2n$ real system \eqref{eq:2nreal} the solution of just one complex $n \times n$ system of linear equations is employed. The idea is to compose the iterate $\mathcal{H}_j=[\mathfrak{h}_1^{(j)},\mathfrak{h}_2^{(j)}]$ of the real and imaginary part of the solution of this complex system. For this purpose we intend to use the connection between a complex system of linear equations and its augmented real version, following the arguments in \cite{Day2001}. Consider real vectors $v_1,v_2,y_1,y_2\in \mathbb{R}^{n}$. Then the $n \times n$ complex system
\begin{align}
	(I_n-\mu A)(v_1+\imath v_2) = y_1 + \imath y_2
	\label{eq:equiv_complex_system}
\end{align}
is equivalent to the $2n \times 2n$ real system
\begin{align}
	\begin{bmatrix}
		I_n-\Real(\mu) A & \Imag(\mu) A\\
		-\Imag(\mu) A & I_n-\Real(\mu) A
	\end{bmatrix}\begin{bmatrix}v_1\\v_2\end{bmatrix} &= \begin{bmatrix}y_1\\ y_2\end{bmatrix}.
	\label{eq:Hdash2}
\end{align}
We now want to find a Butcher tableau
leading to an iteration which is equivalent to the one with $\breve{\Lambda}$ and $\breve{\beta}$ and for which the equivalence of \eqref{eq:equiv_complex_system} and \eqref{eq:Hdash2} can be used.
Clearly, $\widehat{\Lambda} \in \mathbb{R}^{2\times 2}$ from \eqref{eq:lambdahat} will lead to \eqref{eq:Hdash2} when $\mathcal{H}_j = [h_{j-1}, h_{j-1}] + A\mathcal{H}_j\widehat{\Lambda}^{\mathsf{T}}$ \eqref{def:KAHH} is vectorized.
But as shown in \Cref{sec36}, there is no appropriate $\widehat\beta$ such that the requirements of \Cref{thm:dim1_tableaus} are met. Thus, we cannot claim that this is equivalent to the iteration with $\breve\Lambda$.

However, from \eqref{eq:xx} we obtain with  $\widehat{\Lambda}=S^{-1}\breve{\Lambda} S$ and $S$ as in \eqref{eq:S}
\begin{align}
	\mathcal{H}_j' &= [h_{j-1}, h_{j-1}]S^{-\mathsf{T}} + A\mathcal{H}_j'{\widehat{\Lambda}}^{\mathsf{T}}
	\label{eq:Hdash}
\end{align}
for $\mathcal{H}_j'=[v_1,v_2]\in \mathbb{R}^{n\times 2}$ defined by $\mathcal{H}_j=\mathcal{H}_j'S^{\mathsf{T}}.$ The vectorization of \eqref{eq:Hdash} gives the $2n \times 2n$ real linear system \eqref{eq:Hdash2} with $[y_1,y_2]=[h_{j-1},h_{j-1}]S^{-\mathsf{T}} \in \mathbb{R}^{n \times 2}$, which is equivalent to the $n \times n$ complex system \eqref{eq:equiv_complex_system}.  Thus it is suggested to solve \eqref{eq:equiv_complex_system} for the right-hand side defined by $[y_1,y_2]=[h_{j-1},h_{j-1}]S^{-\mathsf{T}}$ in order to obtain $\mathcal{H}_j=[v_1,v_2]S^{\mathsf{T}} \in \mathbb{R}^{n \times 2}.$

Our discussion in the last paragraph is based on the choice $\breve{\Lambda} \in \mathbb{R}^{2 \times 2}$ as in \eqref{eq:breveL} and the fact that it is similar to $\widehat{\Lambda}.$ Alternatively, any other Butcher tableau with $\breve{\Lambda}, \breve{\beta}$ satisfying \eqref{eq:famous_m}, \eqref{eq:eigcond} and \eqref{eq:sim_tabs} and with $\breve{\Lambda}$ similar to $\widetilde{\Lambda}$ can be chosen. This leads to other similarity transformations than $S$ to obtain $\widehat\Lambda$ and thus produces different realifications.

We conclude our discussion by taking a closer look at our approach with $\breve\Lambda$ from \eqref{eq:breveL}. 
As for $S$ as in \eqref{eq:S} $[y_1,y_2]=[h_{j-1},h_{j-1}]S^{-\mathsf{T}}=[h_{j-1},0]$ holds,  the complex system \eqref{eq:equiv_complex_system} to solve reduces to
\begin{align}
	(I_n-\mu A)(v_1+\imath v_2) = h_{j-1}.
\end{align}
This system is just
\eqref{def:KAHH} for the choice $\Lambda^{(j)}=\mu.$ It is also the same as step 3 in \Cref{alg:adi32} for $\alpha_j = -\mu^{-1}.$
Next $\mathcal{H}_j=[v_1,v_2]S^{\mathsf{T}}$ has to be calculated for $S^\mathsf{T} = LQ$ as in \eqref{eq:S}
(see also \eqref{eq:L}).
We note that multiplying the iterate $\mathcal{H}_j$ from the right with the orthogonal matrix $Q\in \mathbb{R}^{2\times 2}$ does not  alter the approximation $P_j,$
	\begin{align}
		(\mathcal{H}_jQ)\diag(\breve\beta)(\mathcal{H}_j Q)^{\mathsf{H}} = \mathcal{H}_j \diag(\breve\beta) \mathcal{H}_j^{\mathsf{H}},
		\label{eq:orth_inv}
	\end{align}
as $\diag(\breve\beta)=2\Real(\mu)I_2.$ Therefore due to \eqref{eq:RK_matmat} and \eqref{eq:ZZH} we obtain the next approximate Cholesky factor $Z_j$ by appending
\begin{align}
	\sqrt{2\Real(\mu)}\mathcal{H}_jQ^{\mathsf{T}} &= \sqrt{2\Real(\mu)}\mathcal{H}_j'S^{\mathsf{T}}Q^{\mathsf{T}}=\sqrt{2\Real(\mu)}\mathcal{H}_j'L Q Q^{\mathsf{T}}\\
	&= \sqrt{2\Real(\mu)}\mathcal{H}_j'L =\sqrt{2\Real(\mu)}[v_1, v_2]L\\
	&= \sqrt{2\Real(\mu)}\sqrt{2}\left[v_1 + \frac{\Real(\mu)}{\Imag(\mu)}v_2,\, \sqrt{\frac{\Real(\mu)^2}{\Imag(\mu)^2}+1}\cdot v_2\right]
\end{align}
to $Z_{j-1}$. This assembly of the real and imaginary part has the same structure as in \eqref{eq:pad}.

All in all we have presented two approaches to keep the iterates real. The former one directly uses \Cref{thm:dim1_tableaus} to replace two $1$-stage complex tableaus by one real $2$-stage tableau. The latter one exploits the connection between a complex system of linear equations and its augmented real version, employing similarity transformations.

\section{Sylvester equation}
\label{sec:sylvester}
In this section we consider the Sylvester equation
\begin{align}
	A\mathcal{Y} - \mathcal{Y}B = FG^{\mathsf{T}}
	\label{eq:sylvester}
\end{align}
with the system matrices $A\in \mathbb{R}^{n\times n}$, $B\in \mathbb{R}^{m\times m}$, $F\in \mathbb{R}^{n\times r}$ and $G\in \mathbb{R}^{m\times r}$, $r\leq n,m$. We assume that the spectra of $A$ and $B$ are disjoint, $\sigma(A) \cap \sigma(B) = \emptyset,$ as then \eqref{eq:sylvester} has a unique solution.

Similar to the Lyapunov case where we only considered Lyapunov equations $A\mathcal{P}+\mathcal{P}A^{\mathsf{T}} = -bb^\mathsf{T}$ 
with rank-one right-hand side, we will restrict our discussion to the case $r=1$, $F =f$, $G = g$  here. The case $r>1$ with $F=[f_1, \cdots, f_r]$ and $G=[g_1, \cdots, g_r]$ can be reduced to $\mathcal{Y}=\sum_{i=1}^r\mathcal{Y}_i$ with
$A\mathcal{Y}_i - \mathcal{Y}_iB = f_i g_i^{\mathsf{T}}, i = 1,\, \ldots, r.$

In analogy to \eqref{eq:gramian_ode} we consider the system of ODEs
\begin{align}
	\frac{\mathrm d}{\mathrm dt}Y(t) &= \hat{h}(t)\breve{h}(t)^{\mathsf{T}}, & Y(0)&=0, \nonumber\\
	\frac{\mathrm d}{\mathrm dt}\hat{h}(t) &= A\hat{h}(t), & \hat{h}(0)&=f,\label{eq:ode_sylvester}\\
	\frac{\mathrm d}{\mathrm dt}\breve{h}(t) &= -B^{\mathsf{T}}\breve{h}(t), & \breve{h}(0)&=-g. \nonumber
	\end{align}
Please note that for $t \rightarrow \infty$ in general we will have  $Y(t) \not\rightarrow \mathcal{Y}.$ Nonetheless, as we will see,
\eqref{eq:ode_sylvester} is useful in order to derive approximations to $\mathcal{Y}.$

Due to the product rule the second derivative of $Y$ satisfies
\begin{align}
	\frac{\mathrm d^2}{\mathrm dt^2}Y(t) &= \frac{\mathrm d}{\mathrm dt}\hat{h}(t)\breve{h}(t)^{\mathsf{T}}\\
	&= A\hat{h}(t)\breve{h}(t)^{\mathsf{T}} - \hat{h}(t)\breve{h}(t)^{\mathsf{T}}B\\
	&= A\left(\frac{\mathrm d}{\mathrm dt}Y(t)\right) - \left(\frac{\mathrm d}{\mathrm dt}Y(t)\right)B.
\end{align}
By integrating both sides over the interval $[0,t]$ we find
\begin{align}
	\frac{\mathrm d}{\mathrm dt}Y(t) &= AY(t) - Y(t)B - fg^{\mathsf{T}}.
\end{align}
Thus the solution of the system of ODEs \eqref{eq:ode_sylvester} does not satisfy \eqref{eq:sylvester} exactly, a rank one residual does remain
\begin{align}
	AY(t)-Y(t)B - fg^{\mathsf{T}}=\hat{h}(t)\breve{h}(t)^{\mathsf{T}}.
	\label{eq:time_sylvester}
\end{align}

\subsection{Approximating $\mathcal{Y}$ by Runge-Kutta methods}
Unlike in the previous section, we propose here to solve the partitioned system of ODEs \eqref{eq:ode_sylvester} in two main steps. First, the latter two equations for $\hat{h}(t)$ and $\breve{h}(t)$ are solved, then the equation for $Y(t)$ is solved. Each of the three ODEs will be solved by a different method. We will make use of three different $s$-stage Runge-Kutta methods: The function $Y(t)$ is approximated using the Butcher tableau with $\Lambda^{(j)}\in \mathbb{C}^{s\times s}$ and $\beta^{(j)}\in \mathbb{C}^{s}$, the function $\hat{h}(t)$  with $\hat{\Lambda}^{(j)}\in \mathbb{C}^{s\times s}$ and $\hat{\beta}^{(j)}\in \mathbb{C}^{s}$ and the function $\breve{h}(t)$ with $\breve{\Lambda}^{(j)}\in \mathbb{C}^{s\times s}$ and $\breve{\beta}^{(j)}\in \mathbb{C}^{s}$.
As before, the upper index $j$ is used to denote the $j$th step of the iteration and w.l.o.g. the time step sizes are all set to $\omega_j=1$. 

In analogy to the derivation in the Lyapunov case we find that the application of the Runge-Kutta methods for
$\hat{h}(t)$ and $\breve{h}(t)$  lead to the iterations
\begin{align}
	\label{eq:hxks_iter}
	\hat{h}_j &= \hat{h}_{j-1} + \hat{K}_j\hat{\beta}^{(j)},\\
	\label{eq:bxks_iter}
	\breve{h}_j &= \breve{h}_{j-1} + \breve{K}_j\breve{\beta}^{(j)},
\end{align}
with $\hat{h}_0 = f$,  $\breve{h}_0 = -g$ (see \eqref{eq:RK_matmat}) and
\begin{align}
	\label{eq:hatK}
	\hat{K}_j &= \left[ A\hat{h}_{j-1}, \dots, A\hat{h}_{j-1} \right] + A\hat{K}_{j}(\hat{\Lambda}^{(j)})^{\mathsf{T}},\\
	\label{eq:breK}
	\breve{K}_j &= \left[ -B^{\mathsf{T}}\breve{h}_{j-1}, \dots, -B^{\mathsf{T}}\breve{h}_{j-1} \right] - B^{\mathsf{T}}\breve{K}_{j}(\breve{\Lambda}^{(j)})^{\mathsf{T}},
\end{align}
(see \eqref{def:KAH}) where $\hat{K}_j=A\hat{\mathcal{H}}_j$ and additionally $\breve{K}_j=-B^{\mathsf{T}}\breve{\mathcal{H}}_j$ holds, with 
\begin{align}
	\hat{\mathcal{H}}_j &= \left[ \hat{h}_{j-1}, \dots, \hat{h}_{j-1} \right] + \hat{K}_{j}(\hat{\Lambda}^{(j)})^{\mathsf{T}},\\
	\breve{\mathcal{H}}_j &= \left[ \breve{h}_{j-1}, \dots, \breve{h}_{j-1} \right] + \breve{K}_{j}(\breve{\Lambda}^{(j)})^{\mathsf{T}}
\end{align}
(see \eqref{def:KAHH}).
The solution of \eqref{eq:hatK} is unique if and only if
\begin{align}
	\hat\mu_p \neq \hat\lambda_q^{-1}, 
\end{align}
for all $\hat\mu_p\in\sigma(\hat\Lambda^{(j)})$ and all $\hat\lambda_q\in \sigma(A)$, while the solution of \eqref{eq:breK} is unique if and only if
\begin{align}
\breve\mu_k \neq -\breve\lambda_\ell^{-1}
\end{align}
for all  $\breve\mu_k \in \sigma(\breve\Lambda^{(j)})$ and all  $\breve\lambda_\ell \in \sigma(B)$ (see \eqref{eq:eigcond}).

The $j$th iterate $Y_j$ approximating the function $Y(t)$ is then given by
\begin{align}
	Y_j 
	&= Y_{j-1} + \hat{\mathcal{H}}_j \diag(\beta^{(j)}) \breve{\mathcal{H}}_j^{\mathsf{H}}
\end{align}
where $Y_0=0$ (see the derivations leading to \eqref{eq:RK_matmat}, in particular note that as in \eqref{eq:ohnektilde}, $\Lambda$ does not appear as the right hand side of $Y'(t) = \hat{h}(t)\breve{h}(t)^\mathsf{T}$ is independent of $Y(t)$).

Following the ideas from the previous section, we rewrite $Y_j$ in terms of two
low rank factors and a diagonal matrix
\[
Y_j = \hat{Z}_j\Gamma_j\breve{Z}_j^{\mathsf{H}}
\]
with
\[
	\hat{Z}_j = \left[ \hat{Z}_{j-1}, \hat{\mathcal{H}}_j \right],\quad
	\breve{Z}_j = \left[ \breve{Z}_{j-1}, \breve{\mathcal{H}}_j \right],\quad
	\Gamma_j = \diag(\Gamma_{j-1}, \beta^{(j)}).
\]
The solution of the Sylvester equation is neither symmetric nor positive definite, so the factors $\hat{Z}_j$ and $\breve{Z}_j$ need not be equal and $\Gamma_j$ may contain arbitrary complex valued entries.

Employing our main idea form the previous section. we will only consider Runge-Kutta methods whose iterates preserve the low rank property of the Sylvester residual, that is
\begin{align}
	AY_j-Y_jB - fg^{\mathsf{T}}=\hat{h}_j\breve{h}_j^{\mathsf{T}}.
	\label{eq:sylvester_res_iter}
\end{align}
Thus, following the arguments in Section \ref{sec:geometric}, the iterates $Y_j$ will not necessarily approximate the function $Y(t)$ very well, but $Y_j$ should be a good approximation to $\mathcal{Y}$ when the residual $\hat h_j\breve h_j^\mathsf{T}$ is small.

\subsection{Runge-Kutta methods which preserve \eqref{eq:sylvester_res_iter}}
To characterize the tableaus which ensure that all iterates fulfill \eqref{eq:sylvester_res_iter}, we insert the first iterates $Y_1, \hat{h}_1$ and $\breve{h}_1$ into \eqref{eq:sylvester_res_iter}. For the left hand side we obtain
\begin{align}
	AY_1-Y_1B - fg^{\mathsf{T}}&= A(Y_0 + \hat{\mathcal{H}}_1\diag(\beta^{(1)})\breve{\mathcal{H}}_1^{\mathsf{H}}) - (Y_0 + \hat{\mathcal{H}}_1\diag(\beta^{(1)})\breve{\mathcal{H}}_1^{\mathsf{H}})B - fg^{\mathsf{T}}\\
	&= A\hat{\mathcal{H}}_1\diag(\beta^{(1)})\breve{\mathcal{H}}_1^{\mathsf{H}} - \hat{\mathcal{H}}_1\diag(\beta^{(1)})\breve{\mathcal{H}}_1^{\mathsf{H}}B - fg^{\mathsf{T}}\\
	&= \hat{K}_1\diag(\beta^{(1)})\breve{\mathcal{H}}_1^{\mathsf{H}} + \hat{\mathcal{H}}_1\diag(\beta^{(1)})\breve{K}_1^{\mathsf{H}} - fg^{\mathsf{T}}\\
	&= \hat{K}_1\diag(\beta^{(1)})\left( [\breve{h}_0, \dots, \breve{h}_0]+\breve{K}_1(\breve{\Lambda}^{(1)})^{\mathsf{T}}\right)^{\mathsf{H}}\\
	&\phantom{=~~} +~ \left([\hat{h}_0, \dots, \hat{h}_0]+\hat{K}_1(\hat{\Lambda}^{(1)})^{\mathsf{T}}\right)\diag(\beta^{(1)})\breve{K}_1^{\mathsf{H}} - fg^{\mathsf{T}}\\
	&= \hat{K}_1(\beta^{(1)})\breve{h}_0^{\mathsf{H}} + \hat{K}_1\diag(\beta^{(1)})\overline{\breve{\Lambda}^{(1)}}\breve{K}_1^{\mathsf{H}}\\
	&\phantom{=~~} +~ \hat{h}_0(\beta^{(1)})^{\mathsf{T}}\breve{K}_1^{\mathsf{H}} +\hat{K}_1(\hat{\Lambda}^{(1)})^{\mathsf{T}}\diag(\beta^{(1)})\breve{K}_1^{\mathsf{H}} - fg^{\mathsf{T}},
\end{align}
while for the right hand side 
\begin{align}
	\hat{h}_1\breve{h}_1^{\mathsf{H}} &= \left(\hat{h}_{0} + \hat{K}_1\hat{\beta}^{(1)}\right) \left(\breve{h}_{0} + \breve{K}_1\breve{\beta}^{(1)}\right)^{\mathsf{H}}\\
	&= -fg^{\mathsf{H}} + \hat{h}_{0}(\breve{\beta}^{(1)})^{\mathsf{H}}\breve{K}_1^{\mathsf{H}}\\
	&\phantom{=~~} +~ \hat{K}_1\hat{\beta}^{(1)}\breve{h}_0^{\mathsf{H}} + \hat{K}_1\hat{\beta}^{(1)}(\breve{\beta}^{(1)})^{\mathsf{H}}\breve{K}_1^{\mathsf{H}}
\end{align}
holds. As $g$ is real $g^{\mathsf{T}}=g^{\mathsf{H}}$ and so the equation
\begin{align}
	&
\hat{K}_1\beta^{(1)}\breve{h}_0^{\mathsf{H}} + \hat{K}_1\diag(\beta^{(1)})\overline{\breve{\Lambda}^{(1)}}\breve{K}_1^{\mathsf{H}}
+ \hat{h}_0(\beta^{(1)})^{\mathsf{T}}\breve{K}_1^{\mathsf{H}} + \hat{K}_1(\hat{\Lambda}^{(1)})^{\mathsf{T}}\diag(\beta^{(1)})\breve{K}_1^{\mathsf{H}}\\
	&~~~~~~~~~~\stackrel{!}{=} \hat{h}_{0}(\breve{\beta}^{(1)})^{\mathsf{H}}\breve{K}_1^{\mathsf{H}} + \hat{K}_1\hat{\beta}^{(1)}\breve{h}_0^{\mathsf{H}} + \hat{K}_1\hat{\beta}^{(1)}(\breve{\beta}^{(1)})^{\mathsf{H}}\breve{K}_1^{\mathsf{H}}
\end{align}
has to hold in order to satisfy \eqref{eq:sylvester_res_iter}.
This implies
\begin{align}
	0 &= \hat{h}_0\left( \breve{\beta}^{(1)} - \overline{\beta^{(1)}} \right)^{\mathsf{H}}\breve{K}_1^{\mathsf{H}} + \hat{K}_1\left(\hat{\beta}^{(1)} - \beta^{(1)} \right)\breve{h}_0^{\mathsf{H}}\\
	&\phantom{=~~}+~\hat{K}_1\left( \hat{\beta}^{(1)}(\breve{\beta}^{(1)})^{\mathsf{H}} - \diag(\beta^{(1)})\overline{\breve{\Lambda}^{(1)}} - (\hat{\Lambda}^{(1)})^{\mathsf{T}}\diag(\beta^{(1)}) \right) \breve{K}_1^{\mathsf{H}},
\end{align}
which is satisfied for $\hat{\beta}^{(1)}={\beta}^{(1)},$ $\breve{\beta}^{(1)}=\overline{\beta^{(1)}}$, and
\begin{align}
	\diag(\beta^{(1)})\overline{\breve{\Lambda}^{(1)} }+ (\hat{\Lambda}^{(1)})^{\mathsf{T}}\diag(\beta^{(1)}) - \beta^{(1)}(\beta^{(1)})^{\mathsf{T}} = 0.
\end{align}
 Via induction this leads to the conditions
\begin{align}
	\label{eq:famous_m-sylvester}
\begin{split}
	0&=\diag(\beta^{(j)})\overline{\breve{\Lambda}^{(j)}} + (\hat{\Lambda}^{(j)})^{\mathsf{T}}\diag(\beta^{(j)}) - \beta^{(j)}(\beta^{(j)})^{\mathsf{T}},\\
	\hat{\beta}^{(j)}&={\beta}^{(j)},\\
	\breve{\beta}^{(j)}&=\overline{\beta^{(j)}}.
\end{split}
\end{align}
Please note that  $\breve{\Lambda}^{(j)}, \breve{\beta}^{(j)}$ and $\hat{\Lambda}^{(j)}, \hat{\beta}^{(j)}$ from the Butcher tableaus for approximating $\breve{h}(t)$ and $\hat{h}(t)$ are relevant here, as well as $\beta^{(j)}$ from the
Butcher tableau for $Y(t),$ but as mentioned above not the matrix $\Lambda^{(j)}.$
DIRK tableaus which fulfill \eqref{eq:famous_m-sylvester} are given  by
\begin{align}\label{eq:DIRK_sylvester}
	\hat\Lambda = \begin{bmatrix}
		\hat\mu_1& 0& \cdots & 0\\
		\beta_1 & \hat\mu_2& \cdots& 0\\
		\vdots & \vdots& \ddots&\vdots\\
		\beta_1&\beta_2&\cdots&\hat\mu_s
	\end{bmatrix},\
	\breve\Lambda = \begin{bmatrix}
		\breve\mu_1& 0& \cdots & 0\\
		\overline\beta_1 & \breve\mu_2& \cdots& 0\\
		\vdots & \vdots& \ddots&\vdots\\
		\overline\beta_1&\overline\beta_2&\cdots&\breve\mu_s
	\end{bmatrix},\
	\beta = \begin{bmatrix}
		\hat\mu_1+\overline{\breve\mu_1}\\
		\hat\mu_2+\overline{\breve\mu_2}\\
		\vdots\\
		\hat\mu_s+\overline{\breve\mu_s}\\
	\end{bmatrix}.
\end{align}

\subsection{Multiplicative update formulae for $\hat{h}_j$ and $\breve{h}_j$}
Let us assume that \eqref{eq:famous_m-sylvester} holds with $\beta^{(j)}_k \neq 0,\, k = 1, \ldots, s.$ Further assume that $A$ and $B$ are diagonalizable, that is, $A=\hat{V}\hat{D}\hat{V}^{-1}$ and $B=\breve{V}\breve{D}\breve{V}^{-1}$ with $\hat{D}=\diag(\hat{\lambda}_1, \dots, \hat{\lambda}_n)$ and $\breve{D}=\diag(\breve{\lambda}_1, \dots, \breve{\lambda}_m).$ Then, as in Section \ref{subsec_multupdate} we find that
the iterations \eqref{eq:hxks_iter} and \eqref{eq:bxks_iter} can be written in multiplicative form as in \eqref{eq:iter_mat_diag}
\begin{align}
	\label{eq:M_sylvester1}
	\hat{h}_j &= \hat{V}\begin{bmatrix}\hat{R}_j(\hat{\lambda}_1)& & &\\ &\hat{R}_j(\hat{\lambda}_2)&&\\&&\ddots &\\ &&&\hat{R}_j(\hat{\lambda}_n) \end{bmatrix}\hat{V}^{-1}\hat{h}_{j-1},\\
	\label{eq:M_sylvester2}
	\breve{h}_j &= \breve{V}^{-\mathsf{T}}\begin{bmatrix}\breve{R}_j(-\breve{\lambda}_1)& & &\\ &\breve{R}_j(-\breve{\lambda}_2)&&\\&&\ddots &\\ &&&\breve{R}_j(-\breve{\lambda}_m) \end{bmatrix}\breve{V}^{\mathsf{T}}\breve{h}_{j-1},
\end{align}
with the stability functions 
\begin{align}
	\hat{R}_j(z) &= 1 + z(\beta^{(j)})^{\mathsf{T}}(I-z\hat{\Lambda}^{(j)})^{-1}\mathds{1}_s,\\
	\breve{R}_j(z) &= 1 + z(\overline{\beta^{(j)}})^{\mathsf{T}}(I-z\breve{\Lambda}^{(j)})^{-1}\mathds{1}_s.
\end{align}
As in the Lyapunov case, these stability functions only depend on the eigenvalues of the tableaus $\hat{\Lambda}^{(j)}$ and $\breve{\Lambda}^{(j)}$. In order to see this, first observe that for $\beta^{(j)}_i \neq 0$ for $i =1, \ldots, s$
 we find from \eqref{eq:famous_m-sylvester} by multiplying with $\diag(\beta^{(j)})^{-1}$ from the right (respectively left)
\begin{align}
	0 &= \diag(\beta^{(j)})\overline{\breve{\Lambda}^{(j)}}\diag(\beta^{(j)})^{-1} + (\hat{\Lambda}^{(j)})^{\mathsf{T}} - \beta^{(j)}\mathds{1}_s^{\mathsf{T}},\\
	0 &= \overline{\breve{\Lambda}^{(j)}} + \diag(\beta^{(j)})^{-1}(\hat{\Lambda}^{(j)})^{\mathsf{T}}\diag(\beta^{(j)}) - \mathds{1}_s(\beta^{(j)})^{\mathsf{T}}.
\end{align}
As any matrix is similar to its transpose, these equations imply the similarities
\begin{align}
	\label{eq:similar_sylvester}
\begin{split}
	\hat{\Lambda}^{(j)}-\mathds{1}_s(\beta^{(j)})^{\mathsf{T}} &\quad\sim\quad -\overline{\breve{\Lambda}^{(j)}},\\
	\breve{\Lambda}^{(j)}-\mathds{1}_s(\beta^{(j)})^{\mathsf{H}} &\quad\sim\quad -\overline{\hat{\Lambda}^{(j)}}.
\end{split}
\end{align}

Let $\hat{\Lambda}^{(j)}$ have eigenvalues $\hat{\mu}_1, \dots, \hat{\mu}_s$ and let $\breve{\Lambda}^{(j)}$ have eigenvalues $\breve{\mu}_1, \dots, \breve{\mu}_s$. Now with \eqref{eq:stab} and \eqref{eq:similar_sylvester} we have (similar to \eqref{eq:char_poly_zinv})
\begin{align}\label{eq:stab_sylv}
\begin{split}
	\hat{R}_j(z) &= \frac{\det(I-z(\hat{\Lambda}^{(j)}-\mathds{1}_s(\beta^{(j)})^{\mathsf{T}}))}{\det(I-z\hat{\Lambda}^{(j)})}
	= \frac{\det(I+z\overline{\breve{\Lambda}^{(j)}})}{\det(I-z\hat{\Lambda}^{(j)})}
=\prod_{i=1}^{s}\frac{(1+z\overline{\breve{\mu}_i})}{(1-z\hat{\mu}_i)},\\
	\breve{R}_j(z) &= \frac{\det(I-z(\breve{\Lambda}^{(j)}-\mathds{1}_s(\beta^{(j)})^{\mathsf{H}}))}{\det(I-z\breve{\Lambda}^{(j)})}
	= \frac{\det(I+z\overline{\hat{\Lambda}^{(j)}})}{\det(I-z\breve{\Lambda}^{(j)})}
=\prod_{i=1}^{s}\frac{(1+z\overline{\hat{\mu}_i})}{(1-z\breve{\mu}_i)}.
\end{split}
\end{align}
Thus, if \eqref{eq:sylvester_res_iter} is enforced, then the stability functions of the Runge-Kutta method for $\hat{h}(t)$ and for $\breve{h}(t)$ are not independent of each other. Both depend on the eigenvalues of $\hat\Lambda^{(j)}$ and $\breve\Lambda^{(j)}$. Thus, also the iterates
$\hat{h}_j$ \eqref{eq:M_sylvester1} and $\breve{h}_j$ \eqref{eq:M_sylvester2} depend on those eigenvalues.

The stability functions corresponding to the $s$-stage DIRK tableaus as in \eqref{eq:DIRK_sylvester} will have the form \eqref{eq:stab_sylv}.
As in the previous section, we can restrict ourselves to the use of several different $1$-stage Butcher tableaus satisfying \eqref{eq:famous_m-sylvester}, i.e. $\hat\Lambda^{(j)}=\hat\mu_j\in\mathbb{C}$ and $\breve\Lambda^{(j)}=\breve\mu_j\in\mathbb{C}$ with 
 $\beta^{(j)}=\hat{\beta}^{(j)}=\hat{\mu}_j + \overline{\breve{\mu}_j} \in \mathbb{C}$ and $\overline{\beta^{(j)}}=\breve{\beta}^{(j)}=\overline{\hat{\mu}_j} + \breve{\mu}_j \in \mathbb{C}$.

The resulting iteration for a low rank approximation to the solution of the Sylvester equation \eqref{eq:sylvester} is summarized in \Cref{alg:lr_sylvester}. It is equivalent to \cite[Alg. 3.4]{Kue16} with $\alpha_j=-\breve\mu_j^{-1}$ and $\beta_j=\hat\mu_j^{-1}$ (and $E=I_n$, $C=I_m$, $r=1$), in analogy to \Cref{thm:algo_equiv}.

In the special case $B=-A^{\mathsf{T}}$ and $g=-f =b$ the Sylvester equation \eqref{eq:sylvester} reduces to the Lyapunov equation \eqref{eq:lyap}. If additionally for the parameters $\hat\mu_j=\breve\mu_j,\, j=1, \dots, N,$ holds, then \Cref{alg:lr_sylvester} reduces to \Cref{alg:quad_adi} and we find $\hat\mu_j+\overline{\breve\mu}_j=2\Real(\hat\mu_j)\in \mathbb{R}$.

\begin{algorithm}[ht]
    \caption{Low rank solution to \eqref{eq:sylvester} via $1$-stage Runge-Kutta methods}
    \label{alg:lr_sylvester}
    \begin{algorithmic}[1] 
	    \Input $A\in \mathbb{R}^{n\times n}$, $B\in \mathbb{R}^{m\times m}$, $f\in\mathbb{R}^{n\times 1}$, $g\in \mathbb{R}^{m\times 1}$, parameters $\left\{ \hat\mu_1,\dots,\hat\mu_{N} \right\}\subset \mathbb{C}$ and $\left\{ \breve\mu_1,\dots,\breve\mu_{N} \right\}\subset \mathbb{C}$
	    \Output  matrices $\hat Z\in \mathbb{C}^{n\times N}$, $\breve Z\in \mathbb{C}^{m\times N}$ and $\Gamma\in \mathbb{C}^{N\times N}$ with $\hat{Z} \Gamma \breve{Z}^{\mathsf{H}}\approx \mathcal{Y}$
	    \State initialize $\hat{h}_0=f$, $\breve{h}_0=-g$, $\hat{Z}_0 = \breve{Z}_0 = \Gamma_0=[\ ]$
    \For {$j=1,\dots,N$}
    \State solve $(I-\hat\mu_j A)\hat K_j=A\hat h_{j-1}$ for $\hat K_j$
    \State solve $(I+\breve\mu_j B^{\mathsf{T}})\breve K_j=-B^{\mathsf{T}}\breve h_{j-1}$ for $\breve K_j$
    \State $\hat{\mathcal{H}}_j=\hat h_{j-1} + \hat \mu_j \hat K_j$
    \State $\breve{\mathcal{H}}_j=\breve h_{j-1} + \breve \mu_j \breve K_j$
    \State update  $\hat Z_j=[\hat Z_{j-1}, \hat{\mathcal{H}}_j]$
    \State update  $\breve Z_j=[\breve Z_{j-1}, \breve{\mathcal{H}}_j]$
    \State update  $\Gamma_j = \diag(\Gamma_{j-1}, (\hat\mu_j+\overline{\breve{\mu}}_j)I_r)$
    \State $\hat h_{j} = \hat h_{j-1} +(\hat\mu_j+\overline{\breve{\mu}}_j) \hat K_j$
    \State $\breve h_{j} = \breve h_{j-1} + (\overline{\hat\mu}_j+\breve{\mu}_j) \breve K_j$
    \EndFor
    \State $\hat Z=\hat Z_N$, $\breve Z=\breve Z_N$, $\Gamma=\Gamma_N$
    \end{algorithmic}
\end{algorithm}

\section{Conclusion}
\label{sec:conclusion}
In this paper we presented a new derivation of residual based low rank iterations for the solution of (large-scale) linear matrix equations. For the Lyapunov equation a quadrature approach is evident, as for a stable system matrix the solution has an integral representation \eqref{eq:P}. We applied a Runge-Kutta method parameterized by an arbitrary and possibly complex valued Butcher tableau to approximate the solution of the system of ODEs \eqref{eq:gramian_ode}, resulting in \Cref{alg:quad}. The Lyapunov residual which remains after each iteration step
is determined.  Runge-Kutta methods that preserve the rank of the initial residual were characterized. By making use of the stability function of a Runge-Kutta method it was shown that these methods are equivalent to DIRK methods and thus equivalent to \Cref{alg:quad_adi}, which is itself equivalent to the ADI iteration. A realification approach based on similarity transformations was presented.

All ideas are applied in slightly modified form to the Sylvester equation in \Cref{sec:sylvester}. Although the solution of the system of ODEs \eqref{eq:ode_sylvester} does not converge to the solution of the Sylvester equation, the application of Runge-Kutta methods which preserve the initial rank of the residual yields a sound approximation, justified by \eqref{eq:M_sylvester1} and \eqref{eq:M_sylvester2}. That is, by retaining a geometric, qualitative property during the quadrature the ADI equivalent \Cref{alg:lr_sylvester} for the approximation of the solution of a Sylvester equation was derived from a system of ODEs.

\bibliographystyle{elsarticle-harv}
\bibliography{literatur}

\end{document}